\tikzset{downshift/.style={yshift=-40pt}}
\newcommand{\avoidindex}{\mathrm{ind}}
\newcommand{\rev}[1]{{#1}^R}
\newcommand{\formula}[1]{\psi_{#1}}
\newcommand{\cycle}[2]{c_{#1}^{({#2})}}
\newtheorem{theorem}{Theorem}[section]
\newtheorem{lemma}[theorem]{Lemma}
\newtheorem{corollary}[theorem]{Corollary}
\theoremstyle{definition}
\newtheorem{definition}{Definition}[section]
\theoremstyle{remark}
\title{A family of formulas with reversal of high avoidability index}
\author{James Currie, Lucas Mol, and Narad Rampersad}
\date{}
\begin{document}

\maketitle

%
%
%
%

\begin{abstract}
We present an infinite family of formulas with reversal whose avoidability index is bounded between $4$ and $5,$ and we show that several members of the family have avoidability index $5.$  This family is particularly interesting due to its size and the simple structure of its members.  For each $k\in\{4,5\},$ there are several previously known avoidable formulas (without reversal) of avoidability index $k,$ but they are small in number and they all have rather complex structure.

\vspace{0.5cm}

\noindent
\textit{Keywords:} pattern avoidance; pattern with reversal; formula with reversal; avoidability index.

\noindent
Mathematics Subject Classification 2010: 68R15
\end{abstract}


\section{Preliminaries}

Let $\Sigma$ be a set of letters called \textit{variables}.  A \textit{pattern} $p$ over $\Sigma$ is a finite word over alphabet $\Sigma$.  A \textit{formula} $\phi$ over $\Sigma$ is a finite set of patterns over $\Sigma.$  Instead of set notation, we usually use dot notation to denote formulas; that is, for $p_1,\dots,p_n\in \Sigma^*$ we let
\[
p_1\cdot p_2\cdot \dots\cdot p_n=\{p_1,p_2,\dots,p_n\}.
\]
The elements of a formula $\phi$ are called the \textit{fragments} of $\phi.$

For an alphabet $\Sigma,$ define the \textit{reversed alphabet} $\Sigma^R=\{\rev{x}\colon\ x\in \Sigma\},$ where $\rev{x}$ denotes the \textit{reversal} or \textit{mirror image} of variable $x.$  A \textit{pattern with reversal} is a pattern over $\Sigma\cup\Sigma^R.$  
A \textit{formula with reversal} over $\Sigma$ is a formula over $\Sigma\cup\Sigma^R,$ i.e.\ a finite set of patterns with reversal over $\Sigma.$

For words over any alphabet $A,$ we denote by $-^R$ the reversal antimorphism; if $a_1,a_2,\dots, a_n\in A$, then
\[
(a_1a_2\dots a_n)^R=a_na_{n-1}\dots a_1.
\]
We say that a morphism $f:(\Sigma\cup \Sigma^R)^*\rightarrow A^*$ \textit{respects reversal} if $f(\rev{x})=f(x)^R$ for all variables $x\in \Sigma.$  Note that any morphism $f:\Sigma^*\rightarrow A^*$ extends uniquely to a morphism from $(\Sigma\cup \Sigma^R)^*$ that respects reversal.

Let $p$ be a pattern (with reversal).  An \textit{instance} of $p$ is the image of $p$ under some non-erasing morphism (respecting reversal).  A word $w$ \textit{avoids} $p$ if no factor of $w$ is an instance of $p.$  Let $\phi$ be a binary formula (with reversal).  We say that $\phi$ \textit{occurs} in $w$ if there is a non-erasing morphism $h$ (which respects reversal) such that the $h$-image of every fragment of $\phi$ is a factor of $w.$  In this case we say that $\phi$ occurs in $w$ \textit{through $h$}, or that $w$ \textit{encounters} $\phi$ through $h.$  If $\phi$ does not occur in $w$ then we say that $w$ \textit{avoids} $\phi.$  For a positive integer $k,$ let $A_k$ denote an alphabet on $k$ letters.  We say that formula $\phi$ is \textit{$k$-avoidable} if there are infinitely many words of $A_k^*$ which avoid $\phi$;  or equivalently, if there is an $\omega$-word $\mathbf{w}$ over $A_k$ such that every finite prefix of $\mathbf{w}$ avoids $\phi.$  If $\phi$ is not $k$-avoidable we say that $\phi$ is \textit{$k$-unavoidable}.  We say that $\phi$ is \textit{avoidable} if it is $k$-avoidable for some $k\in\mathbb{N}$; otherwise, we say that $\phi$ is \textit{unavoidable}.  Finally, the \textit{avoidability index} of $\phi,$ denoted $\avoidindex(\phi),$ is the least $k\in\mathbb{N}$ such that $\phi$ is $k$-avoidable if $\phi$ is avoidable, and is $\infty$ if $\phi$ is unavoidable.

An open question in pattern avoidance is whether patterns of arbitrarily high avoidability index exist.  At the time of writing it is unknown whether any patterns of avoidability index strictly greater than $5$ exist.  Formulas have been important to the search for patterns with high avoidability index because of the following connection between patterns and formulas.  For every formula, there is an associated pattern of the same avoidability index obtained by replacing every dot with a new distinct letter (this fact was first proven in \cite{CassaigneThesis}, and a similar argument demonstrates the truth of this fact for formulas with reversal as well).  Indeed, this was the primary reason that formulas were introduced, as they are somewhat easier to analyze than the associated patterns.

While the fact that the pattern $xx$ has avoidability index $3$ is a well-known classical result, the first known pattern of avoidability index $4,$ presented by Baker, McNulty, and Taylor in \cite{Index4} arises from the much longer formula $ab\cdot ba\cdot ac\cdot ca\cdot bc.$  Later, Clark \cite{ClarkThesis} demonstrated that every avoidable formula on at most three variables is $4$-avoidable, but found several formulas on four or more variables that have avoidability index $5.$  All of these formulas are rather long and complex.

Relatively little is known about the possible values of the avoidability index of patterns with reversal.  Currie and Lafrance \cite{BinaryPatternsReversal} found the avoidability index of every \textit{binary} pattern with reversal; that is, every pattern on $\Sigma\cup \Sigma^R$ where $|\Sigma|=2.$  In particular, they found that every avoidable binary pattern with reversal has avoidability index at most $3$.  Currie and Rampersad have shown that the growth of the number of binary words avoiding the formula $x\rev{x}x$ is intermediate between polynomial and exponential \cite{CurrieRampersad2015}, a surprising result which they have also shown to hold for the formula $xx\rev{x}$ \cite{CurrieRampersad2016}.  These are the first known instances of such an intermediate growth rate in the context of pattern avoidance, and this suggests that patterns with reversal may be quite different from patterns in the usual sense (i.e.\ without reversal).

In this article we present an infinite family of formulas with reversal whose avoidability index is bounded between $4$ and $5,$ and we show that several members of the family have avoidability index $5.$  This family is actually part of a larger family of formulas with reversal whose members are shown to have avoidability index between $4$ and $7$ in general (although we suspect that the true upper bound is $5$).  The simplicity of our examples makes their high avoidability index all the more surprising, as the previously known formulas (without reversal) of avoidability index $4$ or $5$ are quite complex.

For each $k\geq 1,$ define
\[
\formula{k}=xy_1y_2\dots y_kx\cdot\rev{y_1}\cdot \rev{y_2}\cdot \dots\cdot\rev{y_k}.
\]
In Section \ref{Formula1and2} we show that $\avoidindex(\formula{1})=4$ and $\avoidindex(\formula{2})=5.$  Then we move on to general bounds on the avoidability index of $\formula{k}$ for $k\geq 3.$  In section \ref{GeneralUpperBound} we bound the avoidability index of $\formula{k}$ from above for all $k\geq 3.$  We show that $\avoidindex(\formula{3k})\leq 5$ for $k\geq 1$, $\avoidindex(\formula{3k+1})\leq 6$ for all $k\geq 1,$ and $\avoidindex(\formula{3k+2})\leq 6$ for all $k\geq 2.$  The only remaining case is $\formula{5},$ for which we demonstrate $\avoidindex(\formula{5})\leq 7.$  While we can use backtracking to show that $\avoidindex(\formula{k})\geq 5$ for all $3\leq k\leq 6$ (so in particular $\avoidindex(\formula{3})=\avoidindex(\formula{6})=5$), this method becomes impractical for larger values of $k.$  In Section \ref{GeneralLowerBound} we present a general argument that shows $\avoidindex(\formula{k})\geq 4$ for all $k\geq 7.$

\section{The avoidability index of $\formula{1}$ and $\formula{2}$}\label{Formula1and2}

This section is devoted to proving that $\avoidindex(\formula{1})=4$ and $\avoidindex(\formula{2})=5$.  First of all, one can demonstrate that $\formula{1}$ is $3$-unavoidable and that $\formula{2}$ is $4$-unavoidable by using a standard backtracking algorithm.  It remains to show that $\formula{1}$ is $4$-avoidable and $\formula{2}$ is $5$-avoidable.  In fact, we show that there are exponentially many words on $4$ letters that avoid $\formula{1}$ and exponentially many words on $5$ letters that avoid $\formula{2}.$  Below we define words that have a particular cyclic structure as they will be used in the constructions that follow.

\begin{definition}
Let $k\geq 1$ and let $w=w_1w_2\dots$ be a word in $\{1,\dots,k\}^*.$  Define the \textit{$(a_1,\dots ,a_m)$-cyclic $w$-word} $\cycle{m}{w}$ on distinct letters $a_1,\dots, a_m$ by
\[
\cycle{m}{w}=a_1^{w_1}a_2^{w_2}\dots a_{m}^{w_m}a_1^{w_{m+1}}a_2^{w_{m+2}}\dots a_m^{w_{2m}}\dots.
\] 
Any word isomorphic to $\cycle{m}{w}$ is called an \textit{$m$-cyclic $w$-word}.
\end{definition}

We will find infinitely many words for which the corresponding $4$-cyclic word  avoids $\formula{1},$ and infinitely many words for which the corresponding $5$-cyclic word avoids $\formula{2}.$  Further, we will show that the growth of the number of these words is exponential in each case.  We begin with a lemma that gives a condition on a word $w\in\{1,\dots,k+1\}$ that is satisfied if and only if $\cycle{m}{w}$ avoids $\formula{k}$ (with $m\geq k+2$).  Hence this lemma allows us to determine whether a given $m$-cyclic $w$-word avoids $\formula{k}$ by considering $w$ alone.

\begin{lemma}\label{GeneralLemma}
Let $k\geq 1$ and $m\geq k+2.$  If $w\in\{1,\dots,k+1\}^*$ then the $m$-cyclic $w$-word $\cycle{m}{w}$ avoids $\formula{k}$ if and only if for all $j\in\{1,\dots,k\},$ $w$ has no factor of the form $x'\alpha_1\alpha_2\dots\alpha_j x'',$ where 
\begin{itemize}
\item $|\alpha_i|=1$ for all $i\in\{1,\dots,j\},$ i.e.\ $\alpha_i\in \{1,\dots,k+1\}$; 
\item $\displaystyle\sum_{i=1}^j \alpha_i\geq k$ (in $\mathbb{Z}$);
\item $|x'|=|x''|=n$ with $n\equiv m-j \pmod{m}$; and
\item if $x'=x'_1\dots x'_n$ and $x''=x''_1\dots x''_n$ then $x'_1\geq x''_1,$ $x'_n\leq x''_n,$ and $x'_i=x''_i$ for all $i\in\{2,\dots,n-2\}.$
\end{itemize}
\end{lemma}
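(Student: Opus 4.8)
The plan is to prove the equivalence by translating occurrences of $\formula{k}$ in $\cycle{m}{w}$ into the stated combinatorial condition on the run-length word $w$, the decisive point being that the reversed fragments force the images of the $y_i$ to be powers of single letters. Throughout, write $\cycle{m}{w}=R_1R_2\cdots$ as its factorization into maximal runs, where $R_i=a_{\sigma(i)}^{w_i}$ with $\sigma(i)\equiv i\pmod m$, so that the letter of run $i$ is determined by $i$ modulo $m$ and its length is $w_i$. First I would establish the key structural fact: if a reversal-respecting morphism $h$ realizes $\formula{k}$ in $\cycle{m}{w}$, then each $h(y_i)$ is a power of a single letter. Both $h(y_i)$ and $h(\rev{y_i})=h(y_i)^R$ are factors of $\cycle{m}{w}$; but any factor meeting two or more runs has its run-letters in strictly increasing cyclic order $a_s,a_{s+1},\dots$, whereas the run-letters of its reversal appear in strictly decreasing cyclic order, and such a pattern never occurs in $\cycle{m}{w}$ once $m\geq 3$ (which holds because $m\geq k+2$). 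Hence each $h(y_i)=a_{\ell_i}^{t_i}$ lies inside a single run, and in particular $t_i\leq k+1$.

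Given this, the fragment $h(xy_1\cdots y_kx)$ equals $X\,a_{\ell_1}^{t_1}\cdots a_{\ell_k}^{t_k}\,X$, where $X=h(x)$; it is a factor of $\cycle{m}{w}$ consisting of two equal copies of $X$ separated by a gap of length $\sum_i t_i\geq k$. I would read off its run structure by splitting the block of consecutive runs it occupies as $x'$, then $\alpha_1\cdots\alpha_j$, then $x''$, where $x'$ and $x''$ record the run lengths across the first and second copies of $X$ and $\alpha_1,\dots,\alpha_j$ record the run lengths strictly inside the gap; here $|x'|=|x''|=:n$ and $j\leq k<m$. Since the letters of $\cycle{m}{w}$ depend only on the run index modulo $m$ and the second copy of $X$ begins exactly $n+j$ runs after the first, aligning the leading letters of the two copies forces $n+j\equiv 0\pmod m$, that is $n\equiv m-j\pmod m$. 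Comparing the copies run by run, every interior run of a copy is used in full, so the interior run lengths agree; the leftmost run of the first copy is used only as a suffix whereas the leftmost run of the second copy, starting at the run boundary that follows the gap, is used in full, and equality of the copies forces these lengths equal, giving $x'_1\geq x''_1$; symmetrically the rightmost run of the first copy is used in full while that of the second is used only as a prefix, giving $x'_n\leq x''_n$. The gap occupies precisely the runs of lengths $\alpha_1,\dots,\alpha_j$ and has length at least $k$, whence $\sum_i\alpha_i\geq k$. This yields the required factor $x'\alpha_1\cdots\alpha_jx''$ of $w$.

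For the converse I would reverse this construction. Given a factor $x'\alpha_1\cdots\alpha_jx''$ of $w$ with the stated properties, I would take the two copies of $X$ to be the factor of $\cycle{m}{w}$ over the $n$ runs recorded by $x'$ (respectively $x''$), truncating the extreme runs by the amounts prescribed by $x'_1\geq x''_1$ and $x'_n\leq x''_n$ so that the two copies become identical words, and I would choose the $h(y_i)$ to be single-letter powers that together fill the $\sum_i\alpha_i\geq k$ letters of the gap—possible because there are at least $k$ letters to be split among the $k$ nonempty blocks, and single-letter powers respect reversal automatically. The congruence $n\equiv m-j\pmod m$ makes the leading letters of the two copies coincide and the interior equalities make their interiors coincide, so $X\,h(y_1)\cdots h(y_k)\,X$ is a genuine factor of $\cycle{m}{w}$ and $h$ realizes $\formula{k}$.

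The main obstacle is the boundary bookkeeping underlying the second paragraph: determining exactly which runs fall into $x'$, into the gap, and into $x''$, and controlling the junctions between a copy of $X$ and the gap. The clean picture above assumes that at each such junction the two adjacent letters differ, so that each copy abuts the gap at an honest run boundary; the degenerate case, in which a copy and the gap share a run of a common letter, shifts the accounting by one run and is the delicate point to handle carefully (and is presumably the reason the interior equalities are asserted only on the inner range rather than all the way to the boundary). It is here that the hypotheses $w\in\{1,\dots,k+1\}^*$ and $m\geq k+2$ do their work: they force $t_i\leq k+1$ and $j\leq k<m$, so the gap spans fewer than $m$ runs and never wraps around to reuse a letter, which keeps these alignments unambiguous and pins the offset between the two copies of $X$ at exactly $n+j$ runs.
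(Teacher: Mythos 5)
Your proposal is correct and follows essentially the same route as the paper: reversibility forces each $h(y_i)$ into a single run (using $m\geq 3$), and an occurrence of $\formula{k}$ is then translated into the stated run-length condition on $w$, and back. The one point you flag as delicate---the junctions where a copy of $h(x)$ may share a run with the gap---is exactly where the paper inserts its one small trick: it replaces $h$ by a normalized morphism $h'$ that transfers the partial runs $a_j^r$ and $a_1^s$ at the two ends of $h(x)$ into $h(y_k)$ and $h(y_1)$ respectively, so that both copies of $h'(x)$ begin and end at run boundaries and the bookkeeping you describe (interior runs equal, $x'_1\geq x''_1$, $x'_n\leq x''_n$) goes through directly.
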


\begin{proof}
First suppose that $w$ has a factor $u$ of the form $x'\alpha_1\dots\alpha_j x''$ for some $j\in\{1,\dots,k\}$ satisfying the conditions of the lemma statement.  We have
\[
\cycle{m}{u}=a_1^{x'_1}a_2^{x'_2}\dots a_{m-j}^{x'_n}a_{m-j+1}^{\alpha_1}\dots a_m^{\alpha_j}a_1^{x''_1}a_2^{x''_2}\dots a_{m-j}^{x''_n},
\]
and this word occurs as a factor of $\cycle{m}{w}$ (up to a shifting of the letters $a_1,\dots a_m$).  For each $i\in\{2,\hdots,n-1\},$ we have $x'_i=x''_i,$ so define $x_i=x'_i=x''_i.$  Notice that the factor
\[
a_1^{x''_1}a_2^{x_2}\dots a_{m-j-1}^{x_{n-1}} a_{m-j}^{x'_n}
\]
appears on either side of $a_{m-j+1}^{\alpha_1}\dots a_m^{\alpha_j}$ in $\cycle{m}{u},$ since $x''_1\leq x'_1$ and $x'_n\leq x''_n.$  Further, the facts that $j\leq k$ and $\displaystyle\sum_{i=1}^j \alpha_i\geq k$ (in $\mathbb{Z}$) allow us to factor $a_{m-j+1}^{\alpha_1}\dots a_m^{\alpha_j}$ into $k$ nonempty words each of which is a power of a single letter.  We conclude that the formula $\formula{k}$ occurs in $\cycle{m}{u},$ and hence also occurs in $\cycle{m}{w}.$

For the converse, suppose that $\formula{k}$ occurs in $\cycle{m}{w}$ through the morphism $h.$  First note that $h(xy_1\dots y_k x)$ cannot be a power of a single letter, as the powers in $\cycle{m}{w}$ come from $w\in\{1,\dots, k+1\}^*.$  However, for each $i\in\{1,\dots,k\},$ $h(y_i)$ must be a power of a single letter, as these are the only reversible factors of $\cycle{m}{w}$ (since $m\geq 3$).  Suppose without loss of generality that $h(y_1)=a_1^{p}$ and $h(y_k)=a_j^{q}.$  We necessarily have $j\leq k$ due to the cyclic ordering of letters in $\cycle{m}{w}.$  We also see that we must have $h(x)=a_{j}^r v a_1^s,$ for some $r,s\geq 0,$ and
\[
v=a_{j+1}^{t_1}a_{j+2}^{t_2}\dots a_m^{t_n}
\]  
with $n\equiv m-j\pmod{m}$, $t_i\in\{1,\dots,k+1\}$ for all $i\in\{1,\dots,n\}.$  Define morphism $h'$ by $h'(x)=v,$ $h'(y_1)=a_{1}^sh(y_1),$ $h'(y_k)=h(y_k)a_{j}^r,$ and $h'(y_i)=h(y_i)$ for all $i\in\{2,\hdots,k-1\}.$  Then $h'$ is also an occurrence of $\formula{k}$ in $\cycle{m}{w}.$  

Now if we write $h'(y_1\dots y_k)=a_1^{\alpha_1}a_2^{\alpha_2}\dots a_j^{\alpha_j},$ then each instance of the factor $h'(xy_1\dots y_kx)=vh'(y_1\dots y_k)v$ in $\cycle{m}{w}$ corresponds to a factor
\[
u=t_1^+t_2t_3\dots t_n \alpha_1\alpha_2\dots \alpha_j t_1t_2\dots t_n^+
\]
of $w,$ where $t_1^+\geq t_1$ and $t_n^+\geq t_n$ (since we don't know what happens to the left and right of an instance of $h'(xy_1\dots y_kx)$ in $\cycle{m}{w}$).  The factor $u$ clearly has the form $x'\alpha_1\dots \alpha_j x''$ described in the lemma statement.
\end{proof}

We are now ready to prove that $\formula{1}$ is $4$-avoidable.  The proof relies on an $8$-uniform morphism  $f$ that takes any square-free ternary word $v$ to a binary word $f(v)$ for which the associated $4$-cyclic word avoids $\formula{1}.$  From here, the exponential growth of the number of words avoiding $\formula{1}$ is straightforward to prove using the fact that $v$ can be any square-free ternary word.

\begin{theorem}\label{Formula1Theorem}
The formula $\formula{1}$ is $4$-avoidable.  Further, the growth of the number of words on $4$ letters avoiding $\formula{1}$ is exponential.
\end{theorem}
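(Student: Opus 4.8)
The plan is to combine Lemma~\ref{GeneralLemma} with an explicit morphism and a counting argument. First I specialize Lemma~\ref{GeneralLemma} to the case $k=1,$ $m=4.$ Here $j$ ranges only over $\{1\},$ the sum condition $\alpha_1\geq 1$ is automatic, and a forbidden factor of $w\in\{1,2\}^*$ is a word $x'\alpha x''$ with $\alpha\in\{1,2\},$ with $|x'|=|x''|=n$ for some $n\equiv 3\pmod 4,$ and with $x'_1\geq x''_1,$ $x'_n\leq x''_n,$ and $x'_i=x''_i$ for all $2\le i\le n-1.$ By the lemma, $\cycle{4}{w}$ avoids $\formula{1}$ precisely when $w$ contains no such factor. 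It therefore suffices to produce an $8$-uniform morphism $f\colon\{0,1,2\}^*\to\{1,2\}^*$ with the property that $f(v)$ contains no forbidden factor whenever $v$ is square-free, since each $\cycle{4}{f(v)}$ is then a word on $4$ letters avoiding $\formula{1}.$

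I would exhibit such an $f$ explicitly (a single $8$-uniform morphism found by search) and verify the key property as follows. Over the binary alphabet $\{1,2\},$ a forbidden factor $u=x'\alpha x''$ is a near-repetition: writing $p=n+1\equiv 0\pmod 4,$ the word $u$ has length $2p-1$ and satisfies $u_i=u_{i+p}$ for every index except possibly $i=1$ and $i=p-1,$ where the two values may differ by the single allowed step ($x'_1\ge x''_1,$ $x'_n\le x''_n$). In particular $u$ contains a repeated block $Y=u_{[2,n-1]}=u_{[n+3,2n]}$ of length $n-2$ whose two occurrences are separated by exactly three letters $x'_n\alpha x''_1.$ The verification is by contradiction: assuming $f(v)$ contains such a $u,$ I would invoke a synchronization property of $f$ (any sufficiently long factor of $f(v)$ occurs in at most one position modulo $8,$ with a uniquely determined preimage), together with the alignment $p\equiv 0\pmod 4,$ to force the long repeated block $Y$ to align with the block boundaries of $f$ once $n$ is large, so that $Y Y$ descends to a genuine square in $v,$ contradicting square-freeness. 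The finitely many short values of $n$ are dispatched by direct inspection of the factors of $f(v).$

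The hard part will be exactly this descent. The period $p$ is guaranteed divisible only by $4,$ not by $8,$ and $u$ carries two positions of boundary slack, so a naive block-alignment argument does not immediately return a square. I expect to handle this by a case analysis on the position of $u$ modulo $8$ and on the two boundary letters $x'_1,x''_1,x'_n,x''_n,$ reducing each case either to a square in $v$ or to a short factor that simply does not occur in the images of $f.$ The whole step reduces avoidance of an unbounded family of near-repetitions in $f(v)$ to the single hypothesis that $v$ is square-free.

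Finally, for the growth statement I would argue as follows. It is classical that the number of square-free words of length $N$ over a ternary alphabet grows exponentially in $N.$ Since $f$ is $8$-uniform with distinct block images it is injective, and the map $w\mapsto \cycle{4}{w}$ is injective as well, since $w$ is recovered from the run-length encoding of $\cycle{4}{w}$ (consecutive runs always use different letters $a_1,a_2,a_3,a_4$). Hence distinct square-free $v$ yield distinct words $\cycle{4}{f(v)},$ each avoiding $\formula{1}.$ As every $\cycle{4}{f(v)}$ has length between $8|v|$ and $16|v|,$ the number of words of length at most $L$ on $4$ letters avoiding $\formula{1}$ is bounded below by the number of square-free ternary words of length $\Theta(L),$ which is exponential in $L.$ This gives both the $4$-avoidability of $\formula{1}$ and the claimed exponential growth; together with the backtracking result that $\formula{1}$ is $3$-unavoidable, it yields $\avoidindex(\formula{1})=4.$
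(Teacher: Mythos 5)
Your strategy is exactly the paper's: specialize Lemma~\ref{GeneralLemma} to $k=1$, $m=4$, build an $8$-uniform morphism $f:\{0,1,2\}^*\to\{1,2\}^*$ sending square-free words to words with no forbidden factor $x'\alpha x''$, and then count. But the theorem lives or dies on the existence of such an $f$, and that is precisely the part you leave open: you say you would ``exhibit such an $f$ explicitly (found by search)'' and you yourself flag the descent from a near-repetition of period $p\equiv 0\pmod 4$ to a square in $v$ as ``the hard part.'' As it stands this is a plan, not a proof. For the record, the paper's morphism is $f(0)=11112122$, $f(1)=12112222$, $f(2)=21111222$, and the descent goes in two stages: first, since positions $3$ and $4$ of every code word carry the letter $1$ while positions $7$ and $8$ carry the letter $2$, a forbidden factor with $n\equiv 3\pmod 8$ would force some letter to equal its opposite, so in fact $n\equiv 7\pmod 8$ and the period is divisible by $8$; second, with full alignment available, the three code words have pairwise distinct prefixes of length $2$ and pairwise distinct factors at positions $5$ and $6$, so a match at either pair of positions synchronizes entire code words and the near-repetition lifts to a square in $v$ (short $n$ being checked by hand). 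Your worry about the period being guaranteed divisible only by $4$ is legitimate, but the resolution is not a longer case analysis on boundary letters; it is the preliminary step that upgrades $4$ to $8$.

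The counting step also has a real, though repairable, soft spot that the paper explicitly warns about: two square-free ternary words $u,v$ of the same length can have $|\cycle{4}{f(u)}|\neq|\cycle{4}{f(v)}|$, because the length of the $4$-cyclic word is the letter-sum of $f(\cdot)$, not its length. Your bound therefore produces exponentially many avoiding words of length \emph{at most} $L$, spread over many lengths; to convert that into exponential growth of the counting function itself you would need to invoke submultiplicativity of the factor-closed language of $\formula{1}$-avoiding words (or some equivalent argument). The paper sidesteps this by drawing $v$ from the exponentially large family of square-free ternary words in which each letter has frequency exactly $1/3$ (Richard--Grimm), so that all the resulting words $\cycle{4}{f(v)}$ have the same length. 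Either repair works, but one of them has to be made explicit.
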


\begin{proof}
Define a morphism $f:\{0,1,2\}^*\rightarrow \{1,2\}^*$ by 
\begin{align*} 
f(0)&=11112122,\\
f(1)&=12112222, \mbox{ and}\\
f(2)&=21111222.
\end{align*}
We claim that if $v$ is a square-free ternary word, then $f(v)$ has no factor of the form $x'\alpha x'',$ where $|\alpha|=1,$ $|x'|=|x''|=n$ with $n\equiv{3}\pmod{4},$ and $x'=x''$ except possibly at the first and last letters (and thus $\cycle{4}{f(v)}$ avoids $\formula{1}$ by Lemma \ref{GeneralLemma}).  We prove the contrapositive of this claim below.

Let $w=f(v)=w_0w_1w_2\dots$ and suppose that the factor 
\[
w_{k-n}\dots w_{k-1}w_kw_{k+1}\dots w_{k+n}
\] 
has the form $x'\alpha x''$ described above.  In particular, $n\equiv 3\pmod{4}$, $w_{k-n}\geq w_{k+1},$ $w_{k-1}\leq w_{k+n}$, and $w_{k-n+i}=w_{k+1+i}$ for all $i\in\{1,\dots,n-2\}.$    We have verified that no factor of this form occurs in the image of any square-free ternary word of length $2,$ so we may assume that $n>3.$  In what follows, we refer to the image of a single letter of $v$ as a \textit{code word} in $w$.

We first demonstrate that if $w_{k-n}\dots w_{k+n}$ has the form $x'\alpha x''$ described above, then $n\equiv 7 \pmod{8}.$  By assumption, $n\equiv 3\pmod{4},$ so it suffices to show that $n\not\equiv 3\pmod{8}.$  Suppose towards a contradiction that $n\equiv 3\pmod{8}.$  Using the assumption that $n>3,$ in particular we have
\begin{align}
w_{k-n+1}w_{k-n+2}w_{k-n+3}=w_{k+2}w_{k+3}w_{k+4}.\label{LemmaEquation1}
\end{align}
Note that the third and fourth letters of every code word are $1$ while the seventh and eighth letters are $2.$  Some letter from $w_{k-n+1}w_{k-n+2}w_{k-n+3}$ must sit in either the third, fourth, seventh, or eighth position of a code word.  However, since $n+1\equiv 4\pmod{8}$ the letter in the corresponding position in $w_{k+2}w_{k+3}w_{k+4}$ has the opposite identity.  This contradicts (\ref{LemmaEquation1}).

Now we may assume that $n\equiv 7\pmod{8}.$  A key observation used below is that if two code words match at their first and second letters, then they are equal.  This follows directly from the fact that $f(0),$ $f(1),$ and $f(2)$ all have distinct prefixes of length $2.$  Similarly, if two code words match at their fifth and sixth letters, then they are equal.  Finally, since $f$ is $8$-uniform we can tell exactly where code words begin and end in $w_{k-n}\dots w_{k+n}$ from the value of $k\bmod{8}.$

\begin{description}
\item[Case I:] If $k\equiv 0\pmod{8}$ then the code words
\[
w_{k-n-1}\dots w_{k-n+6} \mbox{ and } w_k\dots w_{k+7}
\] 
are equal since they match at their fifth and sixth letters.  If $n=7$ we are done, as we have two identical code words in a row, which must have come from a square (of length $2$) in $w.$  Otherwise, the code words
\[
w_{k-8}\dots w_{k-1} \mbox{ and } w_{k+n-7}\dots w_{k+n}
\] 
are also equal since they match at their fifth and sixth letters.  Altogether we have 
\[
w_{k-n-1}\dots w_{k-1}=w_k\dots w_{k+n},
\] 
and hence the preimage of $w_{k-n-1}\dots w_{k+n}$ in $f$ is a square.  The same argument works when $k\equiv 7\pmod{8}$ with all indices shifted to the right by $1.$

\item[Case II:] If $k=1\pmod{8}$ then the code words 
\[
w_{k-n-2}\dots w_{k-n+5} \mbox{ and } w_{k-1}\dots w_{k+6}
\]
are equal since they match at their fifth and sixth letters.  Thus we have 
\[
w_{k-n-2}\dots w_{k-2}=w_{k-1}\dots w_{k+n-1},
\] 
and hence the preimage of $w_{k-n-2}\dots w_{k+n-1}$ in $f$ is a square.  The same argument works when $k=2\pmod{8}$ with all indices shifted to the left by $1.$

\item[Case III:] If $k\equiv 3\pmod{8},$ then the code words
\[
w_{k-3}\dots w_{k+4} \mbox{ and } w_{k+n-2}\dots w_{k+n+5}
\]
are equal since they match at their first and second letters.  Thus we have 
\[
w_{k-n+4}\dots w_{k+4}=w_{k+5}\dots w_{k+n+5},
\] 
and hence the preimage of $w_{k-n+4}\dots w_{k+n+5}$ in $f$ is a square.  The same argument works when $k\equiv 4,\mbox{ }5,\mbox{ and } 6\pmod{8}$ with all indices shifted to the left by $1$, $2$, and $3,$ respectively.
\end{description}

Therefore, if $v\in\{0,1,2\}^*$ is square-free, then the binary word $f(v)$ avoids factors of the form $x'\alpha x''.$  By Lemma \ref{GeneralLemma}, the $4$-cyclic $f(v)$-word $\cycle{m}{f(v)}$ avoids $\formula{1}.$  Since there are infinitely many square-free ternary words, we conclude that $\formula{1}$ is $4$ avoidable.  It remains to show that the growth of the number of words on $4$ letters avoiding $\formula{1}$ is exponential.

It is well known that the number of square-free ternary words grows exponentially \cite{Brandenburg}.  However, this does not immediately imply that the number of words on $4$ letters avoiding $\formula{1}$ grows exponentially, because there are square-free words $u$ and $v$ in $\{0,1,2\}^*$ of the same length for which $\cycle{4}{f(u)}$ and $\cycle{4}{f(v)}$ have different lengths.  We would like $f(u)$ and $f(v)$ to have the same number of $1$'s and $2$'s so that we have $|\cycle{4}{f(u)}|=|\cycle{4}{f(v)}|.$  One way to ensure this is to start with words $u$ and $v$ that have the letters $0$, $1$, and $2$ in the same proportions.  Fortunately, the number of square-free ternary words in which each alphabet letter occurs with proportion exactly $1/3$ grows exponentially (while not explicitly stated there, this fact can easily be gleaned from Section 4.1 of \cite{RichardGrimm2004}; apply any of the Brinkhuis triples given there starting from initial word $abc$).  From this fact we may conclude that the number of words on $4$ letters avoiding $\formula{1}$ grows exponentially.
\end{proof}

Next we show that $\formula{2}$ is $5$-avoidable.  The proof is much shorter than the preceding proof that $\formula{1}$ is $4$-avoidable because we make use of automatic theorem-proving software.  We show that there is an infinite binary word $\mathbf{w}$ such that the $5$-cyclic $\mathbf{w}$-word $\cycle{5}{\mathbf{w}}$ avoids $\formula{2}.$

\begin{theorem}\label{Formula2Theorem}
The formula $\formula{2}$ is $5$-avoidable.
\end{theorem}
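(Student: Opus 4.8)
The plan is to invoke Lemma~\ref{GeneralLemma} with $k=2$ and $m=5$, which reduces the theorem to exhibiting a single infinite word $\mathbf{w}$ over a two-letter subalphabet of $\{1,2,3\}$ whose $5$-cyclic word $\cycle{5}{\mathbf{w}}$ avoids $\formula{2}$. The natural choice is the alphabet $\{1,2\}$, since then the sum condition $\sum_{i=1}^j\alpha_i\ge 2$ is least restrictive. Specializing the lemma to this setting leaves exactly two families of factors to forbid in $\mathbf{w}$: for $j=1$, any factor of the form $x'\,\alpha\,x''$ with $\alpha=2$ and $|x'|=|x''|=n\equiv 4\pmod 5$; and for $j=2$, any factor of the form $x'\alpha_1\alpha_2 x''$ with $|x'|=|x''|=n\equiv 3\pmod 5$ (here the sum condition is automatic). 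In both cases $x'$ and $x''$ must satisfy $x'_1\ge x''_1$, $x'_n\le x''_n$, and agree at all interior positions, so $x'$ and $x''$ coincide except possibly at their first and last letters.

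My plan is then to produce an explicit candidate $\mathbf{w}$ as the fixed point of a short uniform morphism, so that $\mathbf{w}$ is $b$-automatic for the size $b$ of that morphism, and to verify the avoidance property with automatic theorem-proving software such as Walnut. The point is that each forbidden factor is described by a first-order predicate in the structure $\langle\mathbb{N};+,<,\mathbf{w}[\cdot]\rangle$: for $j=1$ one quantifies over a center index $k$ and a half-length $n$, encodes $n\equiv 4\pmod 5$ through a witness $n=5q+4$, asserts the interior agreement $\forall i\,\bigl(2\le i<n\to \mathbf{w}[k-n-1+i]=\mathbf{w}[k+i]\bigr)$, and asserts the endpoint conditions $\mathbf{w}[k-n]\ge \mathbf{w}[k+1]$, $\mathbf{w}[k-1]\le \mathbf{w}[k+n]$, and $\mathbf{w}[k]=2$; the $j=2$ case is analogous, with a two-letter central block and the shift $n\equiv 3\pmod 5$. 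Negating the disjunction of the two predicates yields a sentence that the prover decides; confirming that it evaluates to true certifies, via Lemma~\ref{GeneralLemma}, that $\cycle{5}{\mathbf{w}}$ avoids $\formula{2}$, and hence that $\formula{2}$ is $5$-avoidable.

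I expect the main difficulty to be producing the candidate $\mathbf{w}$ in the first place: unlike the explicit morphism of Theorem~\ref{Formula1Theorem}, here there is no obvious construction, so $\mathbf{w}$ must be found by an experimental backtracking search and then recognized as the fixed point of a morphism whose incidence structure keeps the associated automaton tractable. The secondary difficulty is purely a matter of careful bookkeeping: the two overlapping windows $x'$ and $x''$ straddle the central $\alpha$-block, so the index shifts in the predicate above, together with the degenerate small-$n$ cases, must be pinned down exactly, and the \emph{asymmetric} endpoint comparisons $x'_1\ge x''_1$ and $x'_n\le x''_n$ must be encoded faithfully as inequalities on the numerically-valued letters of $\mathbf{w}$. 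Once the candidate and the predicate are correct, however, the verification is a terminating decision procedure, which is precisely why this approach is so much shorter than the hand argument used for $\formula{1}$.
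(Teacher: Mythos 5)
Your plan is exactly the paper's proof: reduce via Lemma~\ref{GeneralLemma} (with $k=2$, $m=5$) to forbidding the two factor types with $n\equiv 4$ and $n\equiv 3\pmod 5$ in a binary word, take $\mathbf{w}$ to be the fixed point of a uniform morphism, and certify the two first-order predicates with Walnut. The only piece you leave to a search is the witness itself, which the paper supplies as $\mathbf{w}=\rho^\infty(2)$ for $\rho(1)=22$, $\rho(2)=21$; with that word plugged into your predicates the argument is complete.
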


\begin{proof}
Define morphism $\rho:\{1,2\}^*\rightarrow \{1,2\}^*$ by $\rho(1)=22$ and $\rho(2)=21.$  Let 
\[
\mathbf{w}=\rho^\infty(2)=21222121\dots.
\]  
We claim that $\cycle{5}{\mathbf{w}}$ avoids $\formula{2}.$  By Lemma \ref{GeneralLemma}, it is sufficient to show that $w$ has no factor of the form $x'\alpha_1x''$ or $x'\alpha_1\alpha_2x''$ satisfying the conditions listed in Lemma \ref{GeneralLemma}.  In order to do this, we use the automatic theorem-proving software \textit{Walnut} \cite{Walnut}.  A description of this method including the particular predicates used is included in Appendix \ref{WalnutAppendix}.
\end{proof}

The fact that the number of words avoiding $\formula{2}$ grows exponentially follows fairly easily from Theorem \ref{Formula2Theorem}.  Essentially, we can replace a fixed number of $2$'s in $\rho^k(2)$ by $3$'s, and this change does not introduce any occurrence of $\formula{2}$ to the corresponding $5$-cyclic word.  This gives us a set of words avoiding $\formula{2}$ which we show grows exponentially in size.

\begin{corollary}
The growth of the number of words on $5$ letters avoiding $\formula{2}$ is exponential.
\end{corollary}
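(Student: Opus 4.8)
The plan is to produce, for each large $k$, an exponentially large collection of words over $\{1,2,3\}$ whose associated $5$-cyclic words all avoid $\formula{2}$ and all have the same length. Recall that $\rho^k(2)$ is a prefix of $\mathbf{w}=\rho^\infty(2)$, so by Theorem \ref{Formula2Theorem} together with Lemma \ref{GeneralLemma} the word $\rho^k(2)\in\{1,2\}^*$ contains no \emph{forbidden factor} (no factor of the form $x'\alpha_1\dots\alpha_j x''$ as in Lemma \ref{GeneralLemma} with $j\in\{1,2\}$). Let $P$ be the set of positions of the letter $2$ in $\rho^k(2)$, and for each $S\subseteq P$ let $w_S$ be obtained from $\rho^k(2)$ by replacing the letter at every position of $S$ by $3$. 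I will show that each $w_S$ again has no forbidden factor, so that $\cycle{5}{w_S}$ avoids $\formula{2}$ by Lemma \ref{GeneralLemma}, and then count.

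The heart of the argument, and the step I expect to be the main obstacle, is the \emph{safety} claim that \emph{every} $S\subseteq P$ gives a $w_S$ with no forbidden factor. The key is a monotonicity observation: passing from $\rho^k(2)$ to $w_S$ only ever raises a letter from $2$ to $3$. Suppose $w_S$ had a forbidden factor at some location with parameters $n,j$; I will show that $\rho^k(2)$ has a forbidden factor at the \emph{same} location (same positions, same $n,j$), contradicting the previous paragraph. For this I verify that each of the defining conditions of Lemma \ref{GeneralLemma}, when it holds in $w_S$, already holds in $\rho^k(2)$. For the interior equalities $x'_i=x''_i$ with $2\le i\le n-2$: if two entries of $w_S$ are equal, they are either both $3$ (hence both $2$ in $\rho^k(2)$) or both left unchanged, so they are equal in $\rho^k(2)$ as well; thus an interior match in $w_S$ forces one in $\rho^k(2)$. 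For the endpoint inequality $x'_1\ge x''_1$: if it failed in $\rho^k(2)$ then necessarily $x'_1=1$ and $x''_1=2$ there, but a $1$ is never replaced, so $x'_1=1$ still holds in $w_S$ while $x''_1\ge 2$, and the inequality fails in $w_S$ too; the symmetric argument handles $x'_n\le x''_n$. For the sum condition $\sum_i\alpha_i\ge 2$: when $j=2$ it is automatic, and when $j=1$ it forces $\alpha_1\ge 2$, so the corresponding entry of $\rho^k(2)$ is $2$ and the condition holds there as well. Hence every forbidden factor of $w_S$ descends to one of $\rho^k(2)$, establishing safety for all $S$ with no spacing restriction whatsoever.

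It remains to count. Distinct subsets $S$ yield distinct words $w_S$ (they differ in the positions of their $3$'s), and the map $w\mapsto\cycle{5}{w}$ is injective, since the maximal runs of $\cycle{5}{w}$ recover the exponents $w_i$ (all of which are positive); thus the words $\cycle{5}{w_S}$ are pairwise distinct and all avoid $\formula{2}$. Moreover
\[
\bigl|\cycle{5}{w_S}\bigr|=\sum_i (w_S)_i=\Sigma_k+|S|,\qquad \Sigma_k:=\sum_i\bigl(\rho^k(2)\bigr)_i,
\]
so fixing $|S|=t$ makes all these cyclic words share a single length $N_k=\Sigma_k+t$. Taking $t=\lfloor |P|/2\rfloor$ produces at least $\binom{|P|}{\lfloor |P|/2\rfloor}$ words over $A_5$ of length $N_k$ that avoid $\formula{2}$. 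Since $\rho$ is $2$-uniform, $|\rho^k(2)|=2^k$, and a routine computation shows the letter $2$ occurs with positive asymptotic density, whence $|P|=\Theta(2^k)$ and $\Sigma_k=\Theta(2^k)$, so $N_k=\Theta(|P|)$. Therefore $\binom{|P|}{\lfloor |P|/2\rfloor}\ge 2^{|P|}/(|P|+1)\ge C^{N_k}$ for some constant $C>1$ and all large $k$. As $N_k\to\infty$, this exhibits exponentially many words on $5$ letters avoiding $\formula{2}$, proving that their number grows exponentially.
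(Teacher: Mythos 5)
Your proposal is correct and follows essentially the same route as the paper: replace a fixed number of the $2$'s in $\rho^k(2)$ by $3$'s, observe that this cannot create a forbidden factor in the sense of Lemma \ref{GeneralLemma}, and count the resulting words (all of a common length) binomially. The only difference is cosmetic --- you spell out the monotonicity argument for why raising $2$'s to $3$'s is safe, a step the paper merely asserts, and you fix $|S|=\lfloor |P|/2\rfloor$ rather than $n/4$.
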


\begin{proof}
From the proof of Theorem \ref{Formula2Theorem}, the infinite word
\[
\rho^\infty(2)=21222121\dots
\]
has no factor of the form $x'\alpha_1x''$ or $x'\alpha_1\alpha_2x''$ satisfying the conditions listed in Lemma \ref{GeneralLemma}.  Consider the finite prefix $\rho^k(2)$ for some $k\geq 2$ and let $n=2^k=|\rho^k(2)|.$  Note that $|\rho^k(2)|_2\geq \tfrac{n}{2}.$  Note also that any word obtained from $\rho^k(2)$ by changing any number of $2$'s to $3$'s still has no factors of the form $x'\alpha_1x''$ or $x'\alpha_1\alpha_2x''.$

Let $U_k$ be the set containing all words obtained from $\rho^k(2)$ by replacing exactly $\tfrac{n}{4}$ of the $2$'s by $3$'s.  Clearly no word in $U_k$ has a factor of the form $x'\alpha_1x''$ or $x'\alpha_1\alpha_2x'',$ as this would imply that $\rho^k(2)$ has such a factor.  Thus for any word $u\in U_k,$ the $5$-cyclic $w$-word $\cycle{5}{u}$ avoids $\formula{2}$ by Lemma \ref{GeneralLemma}.  Further, any two words $u,v\in W_k$ have $|u|_i=|v|_i$ for all $i\in\{1,2,3\},$ so that every word in the set
\[
\{\cycle{5}{u}\colon\ u\in U_k\}
\]
has the same length, which is at most $3n$.  Since there are at least $\binom{n/2}{n/4}$ words in $U_k$, we conclude that the growth of the number of words on $5$ letters avoiding $\formula{2}$ is exponential.
\end{proof}

After discovering that $\avoidindex(\formula{1})=4$ and $\avoidindex(\formula{2})=5,$ we were led to wonder whether $\avoidindex(\formula{k})$ grows indefinitely along with $k.$  We provide a negative answer to this question in the next section.

\section{An upper bound on $\avoidindex(\formula{k})$ for $k\geq 3$}\label{GeneralUpperBound}

Our main result in this section is a general construction that shows the $5$-avoidability of $\formula{3k}$ for all $k\geq 1.$  This leads us to believe that $\formula{k}$ is $5$-avoidable for all $k \geq 3.$  Although we are unable to verify this conjecture, we adapt our construction for $\formula{3k}$ to show that $\formula{3k+1}$ is $6$-avoidable for $k\geq 1$ and $\formula{3k+2}$ is $6$-avoidable for $k\geq 2.$  Finally, we address the only remaining formula $\formula{5},$ showing that it is $7$-avoidable.

We start with the main result concerning $\formula{3k}.$  The proof makes use of some new terminology for ease of reading.  A \textit{reversed variable} in a formula $\phi$ is a variable $z$ such that $z$ and $\rev{z}$ both appear in $\phi.$  In particular, in $\formula{k}$ the reversed variables are $y_1,\dots, y_{k}.$

\begin{theorem}
For all $k\geq 1,$ the formula $\formula{3k}$ is $5$-avoidable.  Further, the number of words on $5$ letters avoiding $\formula{3k}$ grows exponentially.
\end{theorem}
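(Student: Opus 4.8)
The plan is to follow the template established for $\formula{2}$ in Theorem \ref{Formula2Theorem}, but adapted to the general index $3k$. The key structural observation is that $\formula{3k}$ has exactly $3k$ reversed variables $y_1,\dots,y_{3k}$, and by Lemma \ref{GeneralLemma} (applied with $k$ replaced by $3k$ and some $m\geq 3k+2$), avoidance of $\formula{3k}$ by a cyclic word $\cycle{m}{w}$ reduces to a combinatorial condition on $w$ over the alphabet $\{1,\dots,3k+1\}$: namely that $w$ has no factor $x'\alpha_1\cdots\alpha_j x''$ for any $j\in\{1,\dots,3k\}$ meeting the four bullet conditions (length congruence $n\equiv m-j\pmod m$, digit sum $\sum\alpha_i\geq 3k$, and the ``almost equal'' matching of $x'$ and $x''$). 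So the first step is to fix a convenient $m$ (most naturally $m=3k+2$) and to restate the avoidance problem purely in terms of forbidden factors of $w$.

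The second step is to exhibit an explicit infinite word $w$ over $\{1,\dots,3k+1\}$ that avoids all these forbidden factors. Guided by the $\formula{2}$ proof, I would look for $w$ as the fixed point (or a coding of a fixed point) of a uniform morphism $\rho$, chosen so that $w$ carries enough ``rigidity'' that any two long factors that agree except at their endpoints are forced to be genuinely equal, which then produces a square or some other forbidden configuration in a preimage. The idea is that the almost-equality condition on $x'$ and $x''$ in Lemma \ref{GeneralLemma}, combined with uniformity of $\rho$ and distinctness of short prefixes/suffixes of the blocks $\rho(i)$, lets one align code-word boundaries (exactly as in the case analysis on $k\bmod 8$ in Theorem \ref{Formula1Theorem}) and conclude that the factors coincide, contradicting square-freeness or the analogous property of the underlying word. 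The digit-sum condition $\sum\alpha_i\geq 3k$ together with $j\leq 3k$ should be what pins down the relevant length scale and the residue classes that need to be ruled out.

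For the verification that $w$ actually avoids every forbidden factor, I expect the cleanest route is the same one used for $\formula{2}$: encode the forbidden-factor condition as a predicate and discharge it with the automatic theorem prover \textit{Walnut} \cite{Walnut}, since the conditions (fixed residue of $n$ modulo $m$, a bounded digit sum, and position-wise equality of $x'$ and $x''$ away from the endpoints) are all expressible in the first-order logic of an automatic sequence once $w$ is chosen to be automatic. The catch is that the digit-sum bound $\sum\alpha_i\ge 3k$ and the length condition both scale with $k$, so a single fixed automaton cannot settle all $k$ at once; I would therefore want either a uniform family of morphisms together with a $k$-independent structural argument (as in the hand proof for $\formula{1}$), or an inductive/self-similar reduction that transfers avoidance for $\formula{3k}$ from a base case. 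Finally, the exponential-growth claim follows the now-standard recipe from the $\formula{2}$ corollary: take the finite prefixes $\rho^n(\cdot)$, recolor a fixed proportion of occurrences of one letter to a fresh letter (checking this introduces no new forbidden factor), and control lengths by keeping the letter counts balanced, so that a binomial-many words yield $5$-cyclic words of a common length, giving exponentially many distinct avoiding words on $5$ letters.

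The main obstacle, I expect, is the uniformity in $k$. Producing one morphism and argument that simultaneously handles the growing alphabet $\{1,\dots,3k+1\}$, the growing digit-sum threshold, and the growing length congruence is substantially harder than the fixed-index cases $\formula{1}$ and $\formula{2}$; the Walnut approach works beautifully for a single value but does not obviously generalize, so the real work will be finding a self-similar construction whose correctness can be proven by a $k$-independent combinatorial argument about code-word alignment.
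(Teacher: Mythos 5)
Your plan has a structural flaw that prevents it from proving the theorem as stated. Lemma \ref{GeneralLemma} applies to the $m$-cyclic word $\cycle{m}{w}$, which by definition is a word over the $m$ distinct letters $a_1,\dots,a_m$, and the lemma requires $m\geq k+2$ where $k$ counts the reversed variables. For $\formula{3k}$ this forces $m\geq 3k+2$, so the cyclic-word route produces avoiding words over an alphabet of size at least $3k+2$ — that is $5$ only when $k=1$, and already $8$ for $\formula{6}$. No choice of morphism $\rho$ or Walnut verification can rescue this: the alphabet growth is built into the framework you chose. You correctly sense that ``uniformity in $k$'' is the obstacle, but the real issue is earlier — the reduction you start from cannot yield $5$-avoidability for any $k\geq 2$. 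Beyond that, the proposal does not actually exhibit a construction; it describes properties a hoped-for construction should have and defers the existence question.

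The paper's proof takes a different and much more direct route that sidesteps Lemma \ref{GeneralLemma} entirely. Fix the $5$-letter alphabet $\{0,1,2,a,b\}$, let $d_k(i)=i^{k+1}$ and $g(i)=iab$, and consider $g(d_k(w))$ for $w$ a square-free ternary word. Because this word rigidly alternates a letter of $\{0,1,2\}$ with the block $ab$, and none of the reversals $ai$, $ba$, $ib$ ever occur, any occurrence $h$ of $\formula{3k}$ must send every reversed variable to a single letter; counting letter types modulo $3$ then shows exactly $k$ of the $y_i$ land in $\{0,1,2\}$ and $|h(x)|\equiv 0\pmod 3$. Erasing $a$'s and $b$'s yields a factor $uvu$ of $d_k(w)$ with $|u|\geq 1$ and $|v|=k$, where $v=0^k$ or $v=0^i1^j$ up to relabelling, and a short case analysis shows such a factor forces a square in $w$. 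Exponential growth then follows from the exponential growth of ternary square-free words, since $|g(d_k(w))|=(3k+3)|w|$. The uniform-in-$k$ mechanism you were looking for is precisely this fixed two-letter spacer $ab$ combined with the power map $d_k$, not a scaling of the cyclic-word machinery.
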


\begin{proof}
Fix $k\geq 1$ and let $w=w_0w_1\dots$ be a word in $\{0,1,2\}^*.$  Define the morphism $d_{k}:\{0,1,2\}^*\rightarrow\{0,1,2\}^*$ by $i\mapsto i^{k+1}$ for all $i\in\{0,1,2\}$ and the morphism $g:\{0,1,2\}^*\rightarrow \{0,1,2,a,b\}^*$ by $i\mapsto iab$ for all $i\in\{0,1,2\}.$  We claim that if $\formula{3k}$ occurs in $g(d_k(w))$, then $w$ contains a square.

Suppose that $\formula{3k}$ occurs in $g(d_k(w))$ through morphism $h.$  Several observations can be made from the fact that $g(d_k(w))$ alternates between a letter from $\{0,1,2\}$ and the factor $ab$:

\begin{itemize}

\item the $h$-image of each reversed variable in $\formula{3k}$ must have length $1,$ as these are the only reversible factors of $g(d_k(w))$;

\item exactly $k$ of the $3k$ reversed variables have $h$-image in $\{0,1,2\}$; and

\item $|h(x)|=0 \pmod{3},$ hence $h(x)$ contains at least one letter from $\{0,1,2\}.$

\end{itemize}

Let $u$ be the word obtained by dropping all $a$'s and $b$'s from $h(x)$ and let $v$ be the word obtained by dropping all $a$'s and $b$'s from $h(y_1\dots y_{3k})$.  Clearly $uvu$ is a factor of $d_k(w)$ as we are essentially taking a preimage in $g,$ and by the remarks above we have $|u|\geq 1$ and $|v|=k.$   Up to relabelling of letters, we have either $v=0^k$ or $v=0^i1^j$ with $i,j$ positive and $i+j=k.$  We show below that in each of these cases the factor $uvu$ extends to a factor in $d_k(w)$ that can only have come from a square in $w$.
\begin{description}
\item[Case I:] $v=0^k$

If the factor $00$ occurs in $w,$ then we are done.  So we may assume that $0^{k+2}$ is not a factor of $uvu.$  Hence $u$ must either start with a single $0$ and end with a different letter or end with a single $0$ and start with a different letter.  But then $uvu$ appears internally as  ${\mid}0^ku{\mid}0^ku{\mid}$ or ${\mid}u0^k{\mid}u0^k{\mid},$ respectively.  The preimage in $d_k$ of each of these factors is a square in $w.$

\item[Case II:] $v=0^i1^j$ for positive $i,j$ with $i+j=k$

In this case, the factor $uvu$ in $d_k(w)$ always appears inside the factor
\[
{\mid}1^ju0^i{\mid}1^ju0^i{\mid},
\]
whose preimage is a square in $w.$
\end{description}

We conclude that if $w$ is square-free, then $g(d_k(w))$ avoids the formula $\formula{3k}.$  Since the growth of the number of square-free words on $\{0,1,2\}^*$ is exponential \cite{Brandenburg}, we conclude that the number of words avoiding $\formula{3k}$ grows exponentially as well, as $|g(d_k(w))|=(3k+3)|w|,$ i.e.\ $g(d_k(w))$ is only a constant factor longer than $w.$
\end{proof}

Now we obtain an upper bound on $\avoidindex(\formula{3k+1})$ and $\avoidindex(\formula{3k+2})$ using a similar idea to the one used above to show $\avoidindex(\formula{3k})\leq 5$.  It is easily verified that any long enough word of the form $g(d_k(w))$ encounters both $\formula{3k+1}$ and $\formula{3k+2}$ whether $w$ is square-free or not, so the exact construction used for $\formula{3k}$ will not work for $\formula{3k_1}$ or $\formula{3k+2}.$  However, it turns out that we only need to make a slight modification to $g(d_k(w))$ to make it avoid $\formula{3k+1}$ (or $\formula{3k+2}$) whenever $w$ is square-free.  All we need to do is add a new letter $c$ to $g(d_k(w))$ at certain carefully chosen locations.  The details are given in the corollaries presented below.

\begin{corollary}\label{3k+1}
For all $k\geq 1,$ the formula $\formula{3k+1}$ is $6$-avoidable and the number of words on $6$ letters avoiding $\formula{3k+1}$ grows exponentially.
\end{corollary}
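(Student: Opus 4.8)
The plan is to reuse the construction and the contrapositive argument of the preceding theorem, inserting the new letter $c$ to repair the one place where the argument breaks down. Recall that for $\formula{3k}$ the proof rested on three facts about $g(d_k(w))$: the only reversible factors are single letters, so every reversed variable has $h$-image of length $1$; the congruence $|h(x)|\equiv 0\pmod 3$ together with non-erasingness forces $h(x)$ to contain a letter of $\{0,1,2\}$, i.e.\ $|u|\ge 1$; and the middle $h(y_1\cdots y_{3k})$ contributes exactly $k$ letters of $\{0,1,2\}$, so that the preimage factor $uvu$ has $|v|=k$ and extends to a square. For $\formula{3k+1}$ the middle has length $3k+1\equiv 1\pmod 3$, and this is exactly what fails: now $|h(x)|\equiv 2\pmod 3$ is compatible with $h(x)$ being pure filler (for example $h(x)=ab$), and a length-$(3k+1)$ middle can swallow a whole data-block $w_j^{k+1}$; the resulting occurrence needs no square in $w$ and indeed appears for every $w$.

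I would therefore define the target word by inserting copies of $c$ into $g(d_k(w))$ at positions chosen so that (i) no length-$(3k+1)$ factor can contain a full run of $k+1$ equal letters of $\{0,1,2\}$, forcing the middle to contribute at most $k$ such letters, and (ii) $h(x)$ is again forced to contain a letter of $\{0,1,2\}$ rather than being absorbed into pure filler. Concretely, the $c$'s should play the role of marked separators that every long middle is compelled to cross, so that one of the $3k+1$ reversed variables is pinned to $c$ and only $k$ remain free to land on $\{0,1,2\}$.

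With such a word $W$ in hand, the proof would follow the previous one almost verbatim. Assuming $\formula{3k+1}$ occurs in $W$ through $h$, I would first check that the inserted $c$'s create no new reversible factor of length $\ge 2$, so that every reversed variable still maps to a single letter. I would then read off from the placement of the $c$'s that the middle contributes at most $k$ letters of $\{0,1,2\}$ and that $h(x)$ contains at least one such letter, producing a preimage factor $uvu$ over $\{0,1,2\}$ with $|u|\ge 1$ and $|v|\le k$. Cases I and II of the preceding proof then apply with at most cosmetic changes to exhibit a square in $w$, so that $W$ avoids $\formula{3k+1}$ whenever $w$ is square-free.

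The step I expect to be the main obstacle is the design and verification of the insertion described above: the locations of the $c$'s must simultaneously destroy every pure-filler occurrence (such as the $h(x)=ab$ occurrence, and its variants that try to reabsorb a nearby $c$ into $h(x)$), avoid creating any new multi-letter reversible factor, and leave intact the counting that yields $|v|\le k$ and $|u|\ge 1$. Once these are checked, exponential growth is immediate as before: $|W|$ is a constant multiple of $|w|$, so the exponentially many square-free ternary words of a fixed letter-frequency already used for $\formula{1}$ and $\formula{3k}$ yield exponentially many $6$-letter words avoiding $\formula{3k+1}$.
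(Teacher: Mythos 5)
You have the paper's strategy exactly: keep $g(d_k(w))$ from the $\formula{3k}$ construction and insert a sixth letter $c$ as a separator, and you have correctly reverse-engineered the three facts the insertion must restore (each reversed variable still maps to a single letter, only $k$ of the $3k+1$ reversed variables can take values in $\{0,1,2\}$, and $h(x)$ must contain a data letter). The genuine gap is that you never say where the $c$'s go, and you yourself flag this as the unresolved obstacle; since the content of the corollary's proof is precisely this choice plus its verification, the proposal as written is a plan rather than a proof. A minor further point: your condition (i) is not quite the right target, since $g(d_k(w))$ never has two adjacent equal data letters, so there is no run of $k+1$ equal letters to destroy; what must be prevented is a window of length $3k+1$ containing $k+1$ letters of $\{0,1,2\}$, which does happen in $g(d_k(w))$ (for instance in any window beginning at a data letter).

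The paper's rule is: insert $c$ after every $3k$ letters of $g(d_k(w))$. Since $3k\equiv 0\pmod 3$, every $c$ lands at a block boundary, so $u_w=(i_1ab\,i_2ab\cdots i_kab\,c)(i_{k+1}ab\cdots i_{2k}ab\,c)\cdots$, and the letter classes $\{0,1,2\}$, $\{a\}$, $\{b\}$, $\{c\}$ recur with period $3k+1$. No factor of length two of $u_w$ occurs reversed, so single letters remain the only reversible factors and the middle $h(y_1\cdots y_{3k+1})$ is a window of length exactly $3k+1$; by the periodicity it contains exactly one $c$, $k$ data letters, $k$ letters $a$ and $k$ letters $b$, which is your ``pinning'' of one reversed variable to $c$. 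For the third fact, a pure-filler $h(x)$ would be a nonempty factor of $abc$, and its two occurrences lie at distance $|h(x)|+3k+1$; comparing this distance modulo $3$ with the number of intervening $c$'s (one or two, since the gap has length between $3k+1$ and $3k+3$), and noting that consecutive $c$'s are exactly $3k+1$ apart, rules out every case. With the word pinned down this way, your Cases I and II apply to the projection $uvu$ onto $\{0,1,2\}$ essentially verbatim, and exponential growth follows as you say because $|u_w|$ is a constant multiple of $|w|$.
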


\begin{proof}
Let $w$ be a word in $\{0,1,2\}^*$ and let $d_k$ and $g$ be as in Theorem \ref{GeneralUpperBound}.  From $g(d_k(w)),$ create a word $u_w$ on alphabet $\{0,1,2,a,b,c\}^*$ by inserting the letter $c$ after every $3k$ letters of $g(d_k(w)).$  Now if $\formula{3k+1}$ occurs in $u_w$ through morphism $h,$ it is easily verified that the following conditions hold:
\begin{itemize}
\item the $h$-image of each reversed variable has length $1$;
\item exactly $k$ of the $3k+1$ reversed variables have $h$-image in $\{0,1,2\}$; and
\item $h(x)$ contains at least one letter from $\{0,1,2\}.$
\end{itemize}
Therefore, by arguments very similar to those used in Theorem \ref{GeneralUpperBound}, if $w$ is square-free then $u_w$ avoids $\formula{3k+1},$ and it follows that there are exponentially many words on $6$ letters avoiding $\formula{3k+1}.$
\end{proof}

\begin{corollary}\label{3k+2}
For all $k\geq 2,$ the formula $\formula{3k+2}$ is $6$-avoidable and the number of words on $6$ letters avoiding $\formula{3k+2}$ grows exponentially.
\end{corollary}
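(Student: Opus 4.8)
The plan is to imitate the two preceding constructions, introducing the sixth letter $c$ into the five-letter word $g(d_k(w))$ in order to control the occurrences of $\formula{3k+2}$. As in Corollary~\ref{3k+1}, I would start with $w\in\{0,1,2\}^*$ and the morphisms $d_k$ and $g$ of the main theorem of this section, so that $g(d_k(w))$ alternates, with period $3$, between a letter of $\{0,1,2\}$ and the block $ab$. From $g(d_k(w))$ I would build a word $u_w$ over $\{0,1,2,a,b,c\}$ by inserting copies of $c$ at evenly spaced positions, chosen so that \emph{every} factor of $u_w$ of length $3k+2$ contains exactly two $c$'s; concretely, consecutive $c$'s should be separated by $\lfloor 3k/2\rfloor$ or $\lceil 3k/2\rceil$ of the original letters, which distributes the $c$'s with period $3k+2$ and two $c$'s per period. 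Deleting every $c$ from $u_w$ then recovers $g(d_k(w))$. For $k\geq 2$ this spacing keeps the $c$'s pairwise non-adjacent (indeed at distance at least $3$), so that $u_w$ has no palindromic factor of length greater than $1$; this is precisely the point that fails when $k=1$, where the minimal separation is $1$ and a factor $cXc$ appears, and it explains why $\formula{5}$ must be handled separately.

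The heart of the argument is the same counting observation used in Corollary~\ref{3k+1}. Suppose $\formula{3k+2}$ occurs in $u_w$ through a morphism $h$. Since the only palindromic factors of $u_w$ are single letters, the $h$-image of each of the $3k+2$ reversed variables is a single letter, and hence $h(y_1\cdots y_{3k+2})$ is a factor of $u_w$ of length $3k+2$. By the choice of insertion positions, this factor contains exactly two $c$'s, and deleting them leaves $3k$ consecutive letters of $g(d_k(w))$; because $3k$ is a multiple of $3$, exactly $k$ of these lie in $\{0,1,2\}$, independently of the alignment. Therefore exactly $k$ of the $3k+2$ reversed variables have $h$-image in $\{0,1,2\}$. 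As in Corollary~\ref{3k+1}, one checks that $h(x)$ must contain at least one letter of $\{0,1,2\}$. These are exactly the three structural properties established in the proof of the main theorem.

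With these properties in hand I would conclude exactly as in the $\formula{3k}$ case. Let $u$ be obtained from $h(x)$ and $v$ from $h(y_1\cdots y_{3k+2})$ by deleting every occurrence of $a$, $b$, and $c$. Then $uvu$ is a factor of $d_k(w)$ with $|v|=k$ and $|u|\geq 1$, so up to relabelling $v$ is either $0^k$ or $0^i1^j$ with $i,j>0$ and $i+j=k$ (here one uses that a run of $k$ consecutive letters of $d_k(w)$ meets at most two of its length-$(k+1)$ blocks). The two cases of the main theorem then extend $uvu$ to a factor of $d_k(w)$ whose $d_k$-preimage is a square in $w$. Consequently, if $w$ is square-free then $u_w$ avoids $\formula{3k+2}$. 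Since $|u_w|$ is a constant multiple of $|w|$ and the number of square-free ternary words grows exponentially, the number of words over $\{0,1,2,a,b,c\}$ avoiding $\formula{3k+2}$ grows exponentially as well.

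The step I expect to be the main obstacle is pinning down the insertion positions so that every length-$(3k+2)$ factor of $u_w$ contains exactly two $c$'s while creating no palindromic factor of length greater than $1$ and keeping $h(x)$ forced to meet $\{0,1,2\}$. This balancing is exactly what breaks at $k=1$, and verifying that the scheme succeeds for all $k\geq 2$ (and only then) is the delicate part of the argument.
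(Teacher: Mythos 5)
Your overall strategy is the paper's: insert $c$'s into $g(d_k(w))$ so that every window of length $3k+2$ contains exactly two of them, then rerun the counting argument from the $\formula{3k}$ case. But there is a genuine gap at exactly the step you flag as delicate, and it is twofold. First, what the argument needs is not that $u_w$ has no \emph{palindromic} factor of length greater than $1$, but that it has no \emph{reversible} factor of length greater than $1$: for $h(y_i)$ to be a legal image of a reversed variable it suffices that both $h(y_i)$ and $h(y_i)^R$ occur as factors, and such a factor need not be a palindrome. Non-adjacency of the $c$'s rules out short palindromes but says nothing about this stronger property. Second, your evenly-spaced scheme actually violates the stronger property for odd $k$: the separations $\lfloor 3k/2\rfloor$ and $\lceil 3k/2\rceil$ are then not multiples of $3$, so consecutive insertion points land at different phases of the period-$3$ pattern (letter of $\{0,1,2\}$, then $a$, then $b$). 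One insertion then produces a factor $c\sigma$ with $\sigma\in\{0,1,2\}$ while another produces $\sigma c$; since $d_k$ repeats each letter $k+1$ times, the same $\sigma$ occurs in both contexts, so $c\sigma$ is a reversible factor of length $2$. Some $h(y_i)$ may then have length $2$, and the count ``exactly $k$ of the reversed variables map into $\{0,1,2\}$'' --- and with it the reduction to a square in $w$ --- collapses.

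The paper's construction sidesteps this by giving up on even spacing: it inserts $c$ immediately after the $m$-th occurrence of $b$ whenever $m\equiv 0$ or $1\pmod{k}$. Every $c$ then has the same local context ($b$ on its left, a letter of $\{0,1,2\}$ on its right), so no new reversible factors arise, and the gaps between consecutive $c$'s alternate between $3$ and $3k-3$ letters; the two $c$'s per period of length $3k+2$ still put exactly two $c$'s in every window of that length, which is all the counting needs. (For $k=1$ the gap $3k-3$ degenerates, which is why $\formula{5}$ must be treated separately.) If you replace your insertion rule by one that fixes the phase of every insertion point --- the paper's, for instance --- the rest of your argument goes through essentially as you wrote it.
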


\begin{proof}
Let $w$ be a word in $\{0,1,2\}^*$ and let $d_k$ and $g$ be as in Theorem \ref{GeneralUpperBound}.  From $g(d_k(w)),$ create a word $v_w$ on alphabet $\{0,1,2,a,b,c\}^*$ by inserting the letter $c$ after an appearance of the letter $b$ whenever the total number of $b$'s that have occurred so far is equivalent to $0$ or $1$ modulo $k.$  Then in any factor of length $3k+2$ of $v_w$ we have exactly $k$ appearances of letters from $\{0,1,2\},$ $k$ appearances of $a,$ $k$ appearances of $b,$ and two appearances of $c.$  The remainder of the proof is analogous to that of Corollary \ref{3k+1}, and is omitted.
\end{proof}

The construction of Corollary \ref{3k+2} does not work for $k=1$ (that is, for $\formula{5}$).  We show below that $\avoidindex(\formula{5})\leq 7$ using a construction similar to the ones we have already seen.  This does not seem optimal, but we have not found a construction using fewer letters.

\begin{corollary}
The formula $\formula{5}$ is $7$-avoidable and the number of words on $7$ letters avoiding $\formula{5}$ grows exponentially.
\end{corollary}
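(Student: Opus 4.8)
The plan is to transcribe the construction of Theorem \ref{GeneralUpperBound} with \emph{four} separator letters in place of two, so that every occurrence has exactly one ``real'' letter in its middle. Fix a square-free word $w\in\{0,1,2\}^*,$ let $d_1\colon i\mapsto i^2$ be the doubling morphism (the morphism $d_k$ of Theorem \ref{GeneralUpperBound} with $k=1$), and define $\hat g\colon\{0,1,2\}^*\to\{0,1,2,a,b,c,d\}^*$ by $i\mapsto iabcd$ for each $i\in\{0,1,2\}.$ The candidate word is $\hat g(d_1(w)),$ which uses the seven letters $\{0,1,2,a,b,c,d\}.$ I would prove that if $\formula{5}$ occurs in $\hat g(d_1(w))$ then $w$ contains a square; the exponential count then follows verbatim from the $\formula{3k}$ argument, since $\hat g\circ d_1$ is injective and $10$-uniform and square-free ternary words are exponentially many \cite{Brandenburg}.

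The key structural feature is that $\hat g(d_1(w))$ has the rigid period-$5$ shape in which positions $\equiv 0\pmod 5$ carry letters of $\{0,1,2\}$ while positions $\equiv 1,2,3,4\pmod 5$ carry $a,b,c,d$ respectively. As in the theorem I would first observe that the only reversible factors are single letters: no length-$2$ factor has its reversal occurring, because $a$ is always followed by $b,$ $b$ by $c,$ $c$ by $d,$ and $d$ by a letter of $\{0,1,2\}.$ Hence each of $h(y_1),\dots,h(y_5)$ has length $1.$ Since five consecutive positions meet each residue class modulo $5$ exactly once, exactly one of these five images lies in $\{0,1,2\}.$ Matching the two occurrences of $h(x)$ against the rigid separator pattern then forces $|h(x)|\equiv 0\pmod 5$: a separator occurring inside $h(x)$ pins its residue, and the remaining possibility $h(x)\in\{0,1,2\}^*$ is impossible because separator-free factors have length at most $1,$ whereupon the second copy of $h(x)$ would place a letter of $\{0,1,2\}$ at a position $\equiv 1\pmod 5.$ As $h$ is non-erasing this gives $|h(x)|\ge 5,$ so $h(x)$ contains at least one letter of $\{0,1,2\}.$

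Deleting every $a,b,c,d$ then converts $h(xy_1\cdots y_5x)$ into a factor $uvu$ of $d_1(w)$ with $|u|\ge 1$ and $|v|=1.$ This is exactly the $v=0^k$ instance of Case I of Theorem \ref{GeneralUpperBound} with $k=1$: because $w$ is square-free, every maximal run of $d_1(w)$ has length exactly $2,$ so the lone middle letter of $v,$ flanked by two copies of $u,$ forces a block-aligned occurrence of $(0u)^2$ or $(u0)^2,$ whose $d_1$-preimage is a square of $w.$ I would invoke that case analysis directly rather than repeat it.

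The step I expect to be the crux is the count ``exactly one real letter in the middle,'' together with the block-alignment $|h(x)|\equiv 0\pmod 5.$ This is the single place where four separators are essential: it is precisely the statement that fails in the $k=1$ specialization of Corollary \ref{3k+2}, where a period-$4$ block cannot guarantee a unique real letter inside a window of length $5.$ Once this is in hand, the square-forcing and the exponential estimate are immediate copies of the earlier arguments, and the bound $\avoidindex(\formula{5})\le 7$ follows.
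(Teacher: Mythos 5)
Your construction is correct and is essentially the paper's: the paper likewise maps $i\mapsto iabcd$ into a seven-letter alphabet and reduces an occurrence of $\formula{5}$ to a square in $w$, the only difference being that it composes with $d_2$ ($i\mapsto i^3$) rather than your $d_1$ ($i\mapsto i^2$). Your choice of $d_1$ works just as well --- runs of length exactly $2$ already force the lone projected middle letter to be block-aligned --- and it has the small advantage of matching the $k=1$ instance of Case I of the theorem verbatim.
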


\begin{proof}
Let $w$ be a word in $\{0,1,2\}^*$ and let $d_2$ be as in Theorem \ref{GeneralUpperBound}.  Define morphism $g':\{0,1,2\}^*\rightarrow\{0,1,2,a,b,c,d\}^*$ by $i\mapsto iabcd$ for all $i\in\{0,1,2\}.$  By arguments similar to those already seen in Theorem \ref{GeneralUpperBound}, if $w$ is square-free then $g'(d_2(w))$ avoids $\formula{5},$ and it follows that there are exponentially many words on $7$ letters avoiding $\formula{5}.$
\end{proof}

Now that we have an upper bound on $\avoidindex(\formula{k})$ for all $k\geq 3,$ we prove a nontrivial lower bound on $\avoidindex(\formula{k})$ in the next section.

\section{A lower bound on $\avoidindex(\formula{k})$ for $k\geq 3$}\label{GeneralLowerBound}

It is trivially true that $\avoidindex(\formula{k})\geq 2$ for all $k\geq 1$ since we have proven that $\formula{k}$ is avoidable and it is obvious that $\formula{k}$ is $1$-unavoidable (this is of course true for every formula with reversal).  While backtracking shows that $\formula{k}$ is $4$-unavoidable for $3\leq k\leq 6,$ this calculation gets more computationally intensive as the value of $k$ grows, eventually becoming infeasible.  The main result of this section is that $\avoidindex(\formula{k})\geq 3$ for the remaining cases $k\geq 7.$  We begin with a lemma that will be required to prove this main result.

\begin{lemma}\label{3CyclicUnavoidable}
For any $k,n\geq 1,$ the $3$-cyclic $w$-word $\cycle{3}{w}$ encounters $\formula{k}$ for any word $w\in\{1,\dots,n\}^\omega.$
\end{lemma}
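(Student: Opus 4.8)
The plan is to exhibit an occurrence of $\formula{k}$ in $\cycle{3}{w}$ directly, reducing the problem to a purely combinatorial search for one factor of a controlled shape. Since $m=3$, the only reversible factors of $\cycle{3}{w}$ are powers of a single letter (the same observation used in the proof of Lemma~\ref{GeneralLemma}), so in any occurrence $h$ each $h(y_i)$ must be a single-letter power; being a palindrome, it then maps $\rev{y_i}$ to a factor automatically. Conversely, if I can locate a factor of the form $X\,M\,X$, where $X$ is a nonempty word occurring at both ends and the middle $M$ can be written as a concatenation of exactly $k$ nonempty single-letter powers, then setting $h(x)=X$ and letting $h(y_1),\dots,h(y_k)$ be those powers gives an occurrence of $\formula{k}$. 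Writing $M=c_1^{f_1}\cdots c_r^{f_r}$ in maximal runs, such a decomposition exists precisely when $\mathrm{runs}(M)=r\le k\le |M|$. So the entire lemma reduces to showing that, for every $w$, some factor $X\,M\,X$ of $\cycle{3}{w}$ satisfies $\mathrm{runs}(M)\le k\le |M|$.

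The workhorse is a single-letter choice of $X$. Taking $X=a_c$ to be the last $a_c$ of one $a_c$-block and the first $a_c$ of a later $a_c$-block, the middle $M$ is a run of consecutive full blocks, cleanly bounded with no partial runs, having $\mathrm{runs}(M)=3D-1$ (where $D$ is the number of full $3$-cycles between the two occurrences) and $|M|\ge \mathrm{runs}(M)$ automatically, since every block is nonempty. Because this works from every starting position and for every $D\ge 1$, I can realize $\mathrm{runs}(M)=k$ exactly whenever $k\equiv 2\pmod 3$, settling that residue class for all $w$. The other residues require nudging the run count by $1$ or $2$, which I plan to do either by retaining a controlled tail of $a_c$'s at an endpoint of $M$ (this adds one run, but needs that endpoint block to have length $\ge 2$), or by keeping $\mathrm{runs}(M)$ at the largest admissible value $\equiv 2\pmod 3$ below $k$ and supplying the missing length through the window's excess $\sum(w_i-1)$, splitting a long run to reach exactly $k$ pieces. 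For $k\equiv 0\pmod 3$ this is painless: I need only one block of length $\ge 2$ inside a window of $k-1$ consecutive blocks, and the sole word with no long block at all is $w=1^\omega$, for which $\cycle{3}{w}=(a_1a_2a_3)^\omega$ and the explicit period $X=a_1a_2a_3$ does the job.

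The hard part will be the class $k\equiv 1\pmod 3$, where I need either excess $\ge 2$ inside a window of $k-2$ blocks, or a \emph{clean} two-letter period $X\in\{a_1a_2,\,a_2a_3,\,a_3a_1\}$, whose both-trimmed middle has $\mathrm{runs}(M)\equiv 1\pmod 3$ but only when the two leftover-prone endpoint blocks have length $1$. The obstacle is that a naive pigeonhole on repeated block-patterns is useless here, because a recurring pattern may reappear only at uncontrolled, widely spaced positions, whereas the run count must land in the narrow band $\le k$; so the argument must instead exploit the splitting freedom together with the local distribution of block lengths. I therefore plan a case split: if blocks of length $\ge 2$ are dense enough to force excess $\ge 2$ in some window of the right size, the excess construction applies; otherwise long blocks are sparse, length-$1$ blocks dominate, and I can find a window whose two endpoint blocks both have length $1$, making one of the two-letter periods clean and yielding $\mathrm{runs}(M)=k$ directly. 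The smallest cases $k=1$ and $k=4$ I expect to verify by hand, since the density/sparsity dichotomy only leaves slack for larger $k$; confirming that these local choices leave no gap between the ``dense'' and ``sparse'' regimes for every $w$ is the crux of the proof.
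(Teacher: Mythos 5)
Your reduction --- find a factor $XMX$ of $\cycle{3}{w}$ whose middle $M$ splits into exactly $k$ nonempty single-letter powers, i.e.\ $\mathrm{runs}(M)\le k\le|M|$ --- is sound and is exactly what the paper's proof does implicitly, and your treatment of $k\equiv 2\pmod 3$ (single-letter $X$ at a block junction) and $k\equiv 0\pmod 3$ (one block of length $\ge 2$ somewhere, with $w=1^\omega$ handled by the explicit period $a_1a_2a_3$) matches the paper's Cases I and II. But the case $k\equiv 1\pmod 3$ is not proved: you yourself flag the exhaustiveness of your dense/sparse dichotomy as ``the crux,'' and as stated it does not close. The dense branch needs excess at least $2$ concentrated in a single window of $k-2$ blocks, while the sparse branch needs a window whose \emph{two specific} endpoint blocks both have length $1$; a word in which every relevant window has excess exactly $1$, with the long blocks positioned so that one always lands on an endpoint of your candidate windows, falls into neither branch as you have described them, and nothing in the proposal rules this out.

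The paper closes this case with no density argument at all, by a finite case analysis on the letters of $w$ rather than on windows: if some $w_i\ge 3$, split that block into three pieces; if $w$ contains the factor $22$, split each of the two adjacent long blocks into two pieces; if $w=1^\omega$, use the explicit two-letter period; otherwise $w$ has an isolated letter $2$, say $w_0=2$, which forces $w_1=1$ (no factor $22$) and $w_{k+2}\le 2=w_0$ (no letter exceeding $2$), and then $x\mapsto 0^{w_{k+2}}1$ works: the first copy of $h(x)$ is a \emph{suffix} of the junction $0^{w_0}1^{w_1}$, the second is a \emph{prefix} of the junction $0^{w_{k+2}}1^{w_{k+3}}$, and $y_1,\dots,y_k$ take the $k$ whole blocks in between. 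The idea you are missing is this asymmetry: only the $h(y_i)$ must be whole single-letter powers, while $h(x)$ may straddle blocks and need not align the same way at both ends, so you only need the trailing $0$-run at the second junction to be no longer than the one at the first --- not that both endpoint blocks have length $1$. With that observation the four subcases are manifestly exhaustive. You should either adopt this or supply the missing argument that your two regimes cover every $w$; as written, the lemma is established only for $k\not\equiv 1\pmod 3$.
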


\begin{proof}
Let $w=w_0w_1\dots$ and let $\cycle{3}{w}=0^{w_0}1^{w_1}2^{w_2}0^{w_3}\dots$ in this proof for ease of reading.  We have three cases, one for each possible value of $k\mod 3.$

\begin{description}
\item[Case I:] $k\equiv 2\pmod{3}$

In this case the formula $\formula{k}$ occurs in $\cycle{3}{w}$ as follows:
\[
\underbrace{0}_{\overset{\upmapsto}{x}}\underbrace{1^{w_1}}_{\overset{\upmapsto}{y_1}}\dots \underbrace{2^{w_k}}_{\overset{\upmapsto}{y_k}} \underbrace{0}_{\overset{\upmapsto}{x}}
\]

\item[Case II:] $k\equiv 0\pmod{3}$

First of all, if $w=1^\omega$ then $\formula{k}$ occurs in $\cycle{3}{w}$ as follows:
\[
\underbrace{012}_{\overset{\upmapsto}{x}}\underbrace{0}_{\overset{\upmapsto}{y_1}}\underbrace{1}_{\overset{\upmapsto}{y_2}}\dots \underbrace{2}_{\overset{\upmapsto}{y_k}} \underbrace{012}_{\overset{\upmapsto}{x}}
\]
We may now assume that some letter of $w$ is greater than $1.$  By shifting the index if necessary, we may assume that $w_1>1.$  Then $\formula{k}$ occurs in $\cycle{3}{w}$ as follows:
\[
\underbrace{0}_{\overset{\upmapsto}{x}}
\underbrace{1}_{\overset{\upmapsto}{y_1}}
\underbrace{1^{w_1-1}}_{\overset{\upmapsto}{y_2}}
\underbrace{2^{w_2}}_{\overset{\upmapsto}{y_3}}
\underbrace{0^{w_3}}_{\overset{\upmapsto}{y_4}}
\dots \underbrace{2^{w_{k-1}}}_{\overset{\upmapsto}{y_k}} 
\underbrace{0}_{\overset{\upmapsto}{x}}
\]

\item[Case III:] $k\equiv 1\pmod{3}$

We have already demonstrated by backtracking that $\formula{1}$ is $3$-unavoidable, so certainly it occurs in every infinite word of the form $\cycle{3}{w}.$  We now handle $k\geq 4.$  First of all, if $w=1^\omega$ then $\formula{k}$ occurs in $\cycle{3}{w}$ as follows:
\[
\underbrace{01}_{\overset{\upmapsto}{x}}
\underbrace{2}_{\overset{\upmapsto}{y_1}}
\underbrace{0}_{\overset{\upmapsto}{y_2}}\dots 
\underbrace{2}_{\overset{\upmapsto}{y_k}} \underbrace{01}_{\overset{\upmapsto}{x}}
\]

Not it suffices to show that $\formula{k}$ occurs in $\cycle{3}{w}$ whenever some letter of $w$ is at least $2.$  First suppose that some letter of $w$ is strictly greater than $2.$  By shifting the index if necessary, we can assume $w_1>2.$  Then $\formula{k}$ occurs in $\cycle{3}{w}$ as follows:
\[
\underbrace{0}_{\overset{\upmapsto}{x}}
\underbrace{1}_{\overset{\upmapsto}{y_1}}
\underbrace{1}_{\overset{\upmapsto}{y_2}}
\underbrace{1^{w_1-2}}_{\overset{\upmapsto}{y_3}}
\underbrace{2^{w_2}}_{\overset{\upmapsto}{y_4}}
\underbrace{0^{w_3}}_{\overset{\upmapsto}{y_5}}
\dots \underbrace{2^{w_{k-2}}}_{\overset{\upmapsto}{y_k}} 
\underbrace{0}_{\overset{\upmapsto}{x}}
\]

Suppose now that $w$ contains the factor $22.$  By shifting the index if necessary we may assume $w_1=w_2=2.$  Then $\formula{k}$ occurs in $\cycle{3}{w}$ as follows:
\[
\underbrace{0}_{\overset{\upmapsto}{x}}
\underbrace{1}_{\overset{\upmapsto}{y_1}}
\underbrace{1^{w_1-1}}_{\overset{\upmapsto}{y_2}}
\underbrace{2}_{\overset{\upmapsto}{y_3}}
\underbrace{2^{w_2-1}}_{\overset{\upmapsto}{y_4}}
\underbrace{0^{w_3}}_{\overset{\upmapsto}{y_5}}
\underbrace{1^{w_4}}_{\overset{\upmapsto}{y_6}}
\dots \underbrace{2^{w_{k-2}}}_{\overset{\upmapsto}{y_k}} 
\underbrace{0}_{\overset{\upmapsto}{x}}
\]

Now suppose that $w$ contains the letter $2$; by shifting the index if necessary we may assume $w_0=2.$  Further, we can take $w_1=1$ since we have shown that any $w$ with the factor $22$ encounters $\formula{k}$.  Finally, since we have shown that any $w$ with a letter strictly greater than $2$ encounters $\formula{k},$ we can take $w_{k+2}\leq 2=w_0.$  Thus, we see that $\formula{k}$ occurs in $\cycle{m}{w}$ as follows:
\[
\underbrace{0^{w_{k+2}}1}_{\overset{\upmapsto}{x}}
\underbrace{2^{w_2}}_{\overset{\upmapsto}{y_1}}
\underbrace{0^{w_{3}}}_{\overset{\upmapsto}{y_2}}\dots 
\underbrace{2^{w_{k+1}}}_{\overset{\upmapsto}{y_k}} 
\underbrace{0^{w_{k+2}}1}_{\overset{\upmapsto}{x}}
\]
\end{description}
We conclude that $\cycle{3}{w}$ encounters $\formula{k}$ for every word $w\in\{1,\dots,n\}^\omega.$
\end{proof}

In other words, Lemma \ref{3CyclicUnavoidable} demonstrates that any $\omega$-word on $3$ letters avoiding $\formula{k}$ has at least one reversible factor containing two distinct letters.  This fact will be important to the proof of the following theorem, the main result of this section.

\begin{theorem}\label{3unavoidable}
For all $k\geq 1,$ the formula $\formula{k}$ is $3$-unavoidable.
\end{theorem}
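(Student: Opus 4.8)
The plan is to argue by contradiction: suppose some $\omega$-word $\mathbf{w}$ over a three-letter alphabet $\{a,b,c\}$ avoids $\formula{k}$, and produce an occurrence of $\formula{k}$ anyway. The foundation is a cheap sufficient condition for an occurrence. Since a reversal-respecting morphism sends $\rev{y_i}$ to $h(y_i)^R$, and a power $d^t$ of a single letter equals its own reversal and certainly appears in $\mathbf{w}$ whenever it sits inside $\mathbf{w}$, any factor of $\mathbf{w}$ of the form $XMX$ with $X$ nonempty whose middle $M$ can be written as a product $Y_1Y_2\cdots Y_k$ of $k$ nonempty powers of single letters already realizes $\formula{k}$: put $h(x)=X$ and $h(y_i)=Y_i$, so that $h(x y_1\cdots y_k x)=XMX$ is a factor and each $h(\rev{y_i})=Y_i^R=Y_i$ is a factor. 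To exploit this I would record the block structure, writing the sequence of block letters as $s=s_1 s_2 s_3\cdots$ (a ternary word with no two equal consecutive letters) and the corresponding block lengths as $e_1,e_2,\dots$.

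The key consequences for an \emph{avoiding} $\mathbf{w}$ are two forbidden configurations. First, if $s_i=s_{i+k+1}$ for some $i$, then the $k$ blocks strictly between indices $i$ and $i+k+1$ are $k$ single-letter powers serving as $h(y_1),\dots,h(y_k)$, flanked on each side by the single letter $s_i=s_{i+k+1}$ serving as $h(x)$; this realizes $\formula{k}$. Hence $s$ has no two equal letters at index-distance $k+1$. Second, a block sandwich $s_i=s_{i+2}$ whose middle block $s_{i+1}^{\,e_{i+1}}$ has length $e_{i+1}\geq k$ also realizes $\formula{k}$ (take $h(x)$ a single letter and cut the middle block into $k$ powers), so in $\mathbf{w}$ every such sandwich has a \emph{short} middle block.

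Next I would invoke the consequence of Lemma \ref{3CyclicUnavoidable} recorded in the remark: an avoider has a reversible factor containing two distinct letters. Passing to a shortest such factor, it has the form $ab$ with $ba$ also a factor (a length-two factor $ab$ is reversible exactly when $ba$ occurs), so both block-boundary orientations $a\mid b$ and $b\mid a$ are present in $\mathbf{w}$. This rules out the cyclic situation that Lemma \ref{3CyclicUnavoidable} already dispatches: if $s$ had \emph{no} distance-$2$ repetition then, since also $s_i\neq s_{i+1}$, every length-three window $s_is_{i+1}s_{i+2}$ would be a permutation of $\{a,b,c\}$, forcing $s$ to follow a single cyclic order and making $\mathbf{w}$ a $3$-cyclic word, which encounters $\formula{k}$ by Lemma \ref{3CyclicUnavoidable}. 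Thus $s$ must contain a distance-$2$ repetition, i.e.\ a block sandwich, and the two orientations above must coexist.

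The remaining and hardest step is to turn this ``turnaround'' structure into an actual occurrence of $\formula{k}$, against the standing restrictions that there is no distance-$(k+1)$ repetition and that every sandwich has a short middle block. The main obstacle is matching the two images of $x$: when the blocks near the turnaround are short—the extreme case being a binary region $a^{e}b^{e'}a^{e''}\cdots$, where single-letter images of $x$ cannot span enough material—one is forced to let $h(x)$ straddle a block boundary, and the reversibility of the straddling pieces is exactly what the factors $ab$ and $ba$ supply. I expect the bulk of the work to be a case analysis organized by how the third letter $c$ interleaves with the $a/b$ alternation around the sandwich, where each case should yield either a distance-$(k+1)$ repetition (already excluded) or a factor $XMX$ whose middle splits into $k$ reversible pieces. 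The crux is the simultaneous control of block lengths so that this middle has at most $k$ blocks yet total length at least $k$; balancing these two bounds, while keeping the flanking images of $x$ equal, is where I anticipate the real difficulty.
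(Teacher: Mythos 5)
Your proposal stops short of a proof at exactly the point where the real work lies. After setting up the block word $s$, deriving two necessary conditions on an avoider (no repetition at block-distance $k+1$; every sandwich $s_i=s_{i+2}$ has a middle block of length less than $k$), and using Lemma \ref{3CyclicUnavoidable} to guarantee a reversible two-letter factor, you write that you ``expect'' a case analysis near the turnaround to produce an occurrence and that you ``anticipate the real difficulty'' in balancing block counts against block lengths while matching the two images of $x$. That anticipated difficulty is the theorem; nothing in the proposal resolves it. In particular, a direct attack must locate, somewhere in an arbitrary ternary $\omega$-word satisfying your constraints, a factor $XMX$ with $M$ splitting into exactly $k$ nonempty reversible pieces, and you give no mechanism that forces the two copies of $X$ to exist, let alone to flank a middle of the right size. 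Your list of forbidden configurations is also not exhaustive (an occurrence arises whenever $s_i=s_j$ with at most $k$ intermediate blocks of total length at least $k$, not only in the two extreme cases you record), so even the constraints you would be working against are incomplete.

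The paper sidesteps this entire difficulty with an idea absent from your proposal: induction on $k$. Backtracking handles $k\leq 6$; for the inductive step one assumes $\formula{k}$ is $3$-unavoidable and supposes a recurrent ternary word $w$ avoids $\formula{k+1}$. The inductive hypothesis supplies an occurrence of $\formula{k}$ in $w$, and a short argument shows the witnessing morphism can be taken $1$-uniform (any longer image of $x$ or of some $y_j$ can be split to yield $\formula{k+1}$ immediately). This hands you a completely explicit factor $0a_1\cdots a_k0$ of length $k+2$, and the problem collapses to refactoring that one short word into $k$ nonempty reversible pieces --- a finite case analysis on its length-$2$ squares, with Lemma \ref{3CyclicUnavoidable} used, much as you intended, to certify that one of $02$, $21$, $10$ is reversible in $w$. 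The contrast is instructive: your global ``find $XMX$ from scratch'' formulation makes the existence of the flanking $X$'s the hard unsolved step, whereas the induction manufactures those flanks for free and leaves only a local, checkable combinatorial fact. To salvage your approach you would need either to supply the missing turnaround analysis in full or to restructure around the inductive step.
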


\begin{proof}
We have shown by backtracking that $\formula{k}$ is $3$-unavoidable for $k\leq 6,$ so it remains to verify the theorem statement for $k\geq 7.$  We proceed by induction on $k.$  Suppose for some $k\geq 6$ that $\formula{k}$ is $3$-unavoidable.  Suppose towards a contradiction that $w\in\{0,1,2\}^{\omega}$ is a recurrent word that avoids $\formula{k+1}.$

By the induction hypothesis, we know that $\formula{k}$ occurs in $w,$ say through morphism $h.$  We claim that $h$ must be $1$-uniform.  First suppose that $|h(x)|>1.$  Let $h(x)=va,$ where $v\neq \varepsilon$ and $|a|=1.$  Then $\formula{k+1}$ occurs in $w$ through $g$ defined by
\begin{align*}
x&\mapsto v, \mbox{ and }\\
y_i&\mapsto \begin{cases}
a &\mbox{ if } i=1\\
h(y_{i-1}) &\mbox{ if } i>1.
\end{cases}
\end{align*}
Now suppose that $|h(y_j)|>1$ for some $j\in\{1,\dots,k\}.$  Let $h(y_j)=va,$ where $v\neq \varepsilon$ and $|a|=1.$  Since $h(y_j)$ is reversible in $w$ by the fact that $h$ is an occurrence of $\formula{k}$, $v$ must also be reversible in $w$, and we see that $\formula{k+1}$ occurs in $w$ through $g$ defined by
\begin{align*}
x&\mapsto h(x), \mbox{ and }\\
y_i&\mapsto\begin{cases}
h(y_i) &\mbox{ if } i<j\\
v &\mbox{ if } i=j\\
a &\mbox{ if } i=j+1\\
h(y_{i-1}) &\mbox{ if } i>j+1.
\end{cases}
\end{align*}
Thus $h$ must be $1$-uniform as claimed.

Now take an instance of the fragment $xy_1\dots y_kx$ under some occurrence of $\formula{k}$ in $w$ through a $1$-uniform morphism.  Without loss of generality we may assume that $x\mapsto 0,$ so an instance of $xy_1\dots y_k x$ has the form
\[
0a_1\dots a_k 0,
\]
where $a_i\in\{0,1,2\}.$  First note that this factor cannot be preceded or followed by a $0$ in $w$ as this gives an obvious occurrence of $\formula{k+1}.$  Thus this factor appears internally in $w$ as either $10a_1\dots a_k01$ or $10a_1\dots a_k 02$ (up to relabelling of letters).

Consider the former possibility $10a_1\dots a_k01.$  If $a_1=0$ or $a_1=1,$ then the factor $0a_1$ is reversible, so $\formula{k+1}$ occurs in $w$ as follows:
\[
\underbrace{1}_{\overset{\upmapsto}{x}}
\underbrace{0a_1}_{\overset{\upmapsto}{y_1}}
\underbrace{a_2}_{\overset{\upmapsto}{y_2}}\dots 
\underbrace{a_k}_{\overset{\upmapsto}{y_k}} 
\underbrace{0}_{\overset{\upmapsto}{y_{k+1}}} 
\underbrace{1}_{\overset{\upmapsto}{x}}
\]
Thus we may assume that $a_1=2.$  By a symmetric argument, we may assume that $a_k=2.$  However, then $\formula{k+1}$ occurs in $w$ as follows:
\[
\underbrace{1}_{\overset{\upmapsto}{x}}
\underbrace{02}_{\overset{\upmapsto}{y_1}}
\underbrace{a_2}_{\overset{\upmapsto}{y_2}}\dots 
\underbrace{a_{k-1}}_{\overset{\upmapsto}{y_{k-1}}}
\underbrace{2}_{\overset{\upmapsto}{y_k}} 
\underbrace{0}_{\overset{\upmapsto}{y_{k+1}}} 
\underbrace{1}_{\overset{\upmapsto}{x}}
\]

We may now assume that $0a_1\dots a_k0$ appears internally as $10a_1\dots a_k 02.$  In fact, we can demonstrate that it must appear interally as $10a_1\dots a_k02^{t}1$ for some $t\geq 1.$  Otherwise, $\formula{k+1}$ occurs in $w$ as follows:
\[
\underbrace{0}_{\overset{\upmapsto}{x}}
\underbrace{a_1}_{\overset{\upmapsto}{y_1}}
\underbrace{a_2}_{\overset{\upmapsto}{y_2}}\dots 
\underbrace{a_k}_{\overset{\upmapsto}{y_{k}}}
\underbrace{02^t}_{\overset{\upmapsto}{y_{k+1}}}
\underbrace{0}_{\overset{\upmapsto}{x}}.
\]

Now to reach a contradiction it suffices to show that $0a_1a_2\dots a_k0$ can be factored as $b_1b_2\dots b_k,$ where $b_i\neq \varepsilon$ and is reversible for each $i\in\{1,\dots,k\}.$  If this is the case then $\formula{k}$ occurs in $10a_1\dots a_k02^{t}1$ as follows:
\[
\underbrace{1}_{\overset{\upmapsto}{x}}
\underbrace{b_1}_{\overset{\upmapsto}{y_1}}
\underbrace{b_2}_{\overset{\upmapsto}{y_2}}\dots 
\underbrace{b_k}_{\overset{\upmapsto}{y_{k}}}
\underbrace{2^t}_{\overset{\upmapsto}{y_{k+1}}}
\underbrace{1}_{\overset{\upmapsto}{x}}.
\]

Clearly if $0a_1\dots a_k0$ contains a length $3$ reversible factor then it can be factored into $k$ reversible factors.  Hence we may assume that $0a_1\dots a_k0$ contains no factors of the form $c^3,$ $cdc,$ or $cddc$ for $c,d\in\{0,1,2\}$ (the first two factors are length $3$ reversible factors while in the third factor $cdd$ is reversible).  

\begin{description}
\item[Case I:] $0a_1\dots a_k0$ contains at least two squares of length $2$

Clearly we can factor $0a_1\dots a_k0$ into $k$ reversible factors by choosing any two length $2$ squares.  

\item[Case II:] $0a_1\dots a_k$ contains no squares of length $2$ 

By the assumption that $0a_1\dots a_k0$ has no factors of the form $cdc$ with $c,d\in\{0,1,2\},$ we necessarily have $k\equiv 2\pmod{3}$ and $0a_1\dots a_k0$ must have the cyclic form $021021\dots 0$.  In particular, we must have $k\equiv 2\pmod{3}.$  Along with the supposition that $k\geq 4$ this implies that $k\geq 6.$  By Lemma \ref{3CyclicUnavoidable}, we know that the entire $\omega$-word $w$ cannot have cyclic form, so that at least one of the factors $02,$ $21,$ and $10$ is reversible in $w.$   Each factor $02,$ $21,$ and $10$ appears at least twice in $0a_1\dots a_k0$ (recall $k\geq 6$), meaning that there will be at least two nonoverlapping reversible factors of length $2$ in $0a_1\dots a_k0,$ allowing us to factor it into $k$ reversible factors.

\item[Case III:] $0a_1\dots a_k0$ contains exactly one square of length $2$

By the assumption that $0a_1\dots a_k0$ has no factors of the form $cdc$ or $cddc$ with $c,d\in\{0,1,2\},$ we see that $0a_1\dots a_k0$ must have the cyclic form $0^{p_1}2^{p_2}1^{p_3}0^{p_4}2^{p_5}1^{p_6}\dots 0^{p_{k+1}}$ where $p_i=2$ for some $i\in\{1,\dots,k+1\}$ and $p_j=1$ for all $j\neq i.$  In particular, we must have $k\equiv 0\pmod{3}$.  Again, by Lemma \ref{3CyclicUnavoidable} we know that at least one of the factors $02$, $21$, and $10$ must be reversible in $w.$  It is easy to verify that each of these factors appears at least once in $0^{p_1}2^{p_2}1^{p_3}0^{p_4}\dots 0^{p_{k+1}}$ without overlapping the square of length $2$ (again, recall $k\geq 6$).  Thus we can factor $0a_1\dots a_k0$ into $k$ reversible factors.
\end{description}

We have shown that $\formula{k+1}$ occurs in $w,$ a contradiction.  By induction, we conclude that $\formula{k}$ is $3$-unavoidable for all $k\geq 1.$
\end{proof}

While we conjecture that $\formula{k}$ is actually $4$-unavoidable for all $k\geq 2,$ the proof technique used for Theorem \ref{3unavoidable} does not seem tractable on a $4$-letter alphabet.  It appears that a different technique will be necessary in order to show that this conjecture holds.

\section{Conclusion}

The family $\{\formula{k}\colon\ k\geq 1\}$ is the first infinite family of avoidable formulas (with reversal) that we know of whose members are all $3$-unavoidable.  While we have shown that $\avoidindex(\formula{1})=4$ and $\avoidindex(\formula{k})=5$ for $k\in\{2,3,6\},$ we have only demonstrated bounds on the avoidability index of the remaining formulas.  It would be nice to know the exact avoidability index of $\formula{k}$ for all $k\geq 2$; we suspect that it is $5$.

After discovering such simple formulas with reversal of avoidability index $5$, it seems plausible to us that there are avoidable formulas with reversal of avoidability index $6$ which can be found.  This would be especially interesting as there are currently no known formulas (with or without reversal) of avoidability index $6.$

\providecommand{\MR}{\relax\ifhmode\unskip\space\fi MR }
\providecommand{\MRhref}[2]{%
  \href{http://www.ams.org/mathscinet-getitem?mr=#1}{#2}
}
\providecommand{\href}[2]{#2}

\appendix

\section{Walnut commands for Theorem \ref{Formula2Theorem}}\label{WalnutAppendix}

In order to show that the word $w=\rho^\infty(2)$ avoids factors of the form $x'\alpha_1x''$ and $x'\alpha_1\alpha_2x''$ using \textit{Walnut}, we first create a text file defining the automaton with output corresponding to $w$.  The text is as follows:

\begin{verbbox}
msd_2
0 2
0 -> 0
1 -> 1
1 1
0 -> 0
1 -> 0
\end{verbbox}

\[
\theverbbox
\]

We save this text to a file called ``{\tt w.txt}'' in the directory ``{\tt /Walnut/Word Automata Library/}''.  Next, we construct two automata for use in later commands: an automaton that accepts binary numbers equivalent to $3$ modulo $5,$ and another that accepts binary numbers equivalent to $4$ modulo $5.$  These automata are constructed by the following commands, respectively.

\begin{verbbox}
def is3mod5base2 "?msd_2 Ek n=5*k+3";
def is4mod5base2 "?msd_2 Ek n=5*k+4";
\end{verbbox}

\[
\theverbbox
\]

Now we run the following command:
\begin{verbbox}
eval Case1 "?msd_2 $is4mod5base2(n) 
    & (Ei w[i]>=w[i+n+1] & w[i+n-1]<=w[i+2*n] 
    & Ak (k>0 & k<n-1) => w[i+k]=w[i+n+k+1]
    & w[i+n]=w[0])";
\end{verbbox}

\[
\theverbbox
\]

This command returns an automaton that accepts all natural numbers $n$ for which $w$ has a factor of the form $x'\alpha_1x''$ satisfying the conditions of Lemma \ref{GeneralLemma} (with $|x'|=|x''|=n)$.  The automaton with a single, nonaccepting state is returned, meaning that $w$ has no factors of this form.

Finally, we run the following command:

\begin{verbbox}
eval Case2 "?msd_2 $is3mod5base2(n) 
    & (Ei w[i]>=w[i+n+2] & w[i+n-1]<=w[i+2*n+1]
    & Ak (k>0 & k<n-1) => w[i+k]=w[i+n+k+2])";
\end{verbbox}

\[
\theverbbox
\]

This command returns an automaton that accepts all natural numbers $n$ for which $w$ has a factor of the form $x'\alpha_1\alpha_2x''$ satisfying the conditions of Lemma \ref{GeneralLemma} (with $|x'|=|x''|=n)$.  As above, the automaton with a single, nonaccepting state is returned, meaning that $w$ has no factors of this form.

\end{document}